\DeclareMathAlphabet{\mathpzc}{OT1}{pzc}{m}{it}
\numberwithin{equation}{section}
\theoremstyle{plain}
\newtheorem{thm}{Theorem}[section]
\newtheorem{lem}[thm]{Lemma}
\newtheorem{cor}[thm]{Corollary}
\newtheorem{prop}[thm]{Proposition}
\theoremstyle{definition}
\newtheorem{dfn}{Definition}[section]
\newtheorem{defn}{Definition}[section]
\newtheorem*{ntz}{Notation}
\DeclareMathAlphabet{\mathpzc}{OT1}{pzc}{m}{it}
\DeclareMathOperator{\R}{{\mathbb{R}}}
\DeclareMathOperator{\Ub}{\mathbf{U}}
\DeclareMathOperator{\kt}{{\kappaup}}
\newcommand\Levi{\mathrm{L}}
\DeclareMathOperator{\Gf}{\mathbf{G}}
\DeclareMathOperator{\ad}{\mathrm{ad}}
\DeclareMathOperator{\Ad}{\mathrm{Ad}}
\DeclareMathOperator{\Lie}{\mathrm{Lie}}
\DeclareMathOperator{\zt}{\mathfrak{z}}
\DeclareMathOperator{\pt}{\mathfrak{p}}
\DeclareMathOperator{\Ind}{\mathrm{Int}}
\DeclareMathOperator{\C}{\mathbb{C}}
\newcommand\Kf{\mathbf{K}}
\newcommand\kil{\mathit{b}}
\newcommand\Sf{\mathbf{E}}
\newcommand\sta{\mathfrak{stab}}
\newcommand\Tf{\mathbf{T}}
\newcommand\Gfu{\mathbf{G}_{u}}
\newcommand\gtu{\mathfrak{g}_u}
\newcommand\GL{\mathbf{GL}}
\newcommand\Wg{\mathpzc{W}}
\newcommand\ttu{\mathfrak{t}_u}
\newcommand\qt{\mathfrak{q}}
\newcommand\gt{\mathfrak{g}}
\newcommand\Rad{\mathpzc{R}}
\newcommand\hg{\mathfrak{h}}
\newcommand\vt{\mathfrak{v}}
\newcommand\eg{\mathfrak{e}}
\newcommand\Vf{\mathbf{V}}
\newcommand\mt{\textswab{m}}
\newcommand\Zf{\mathpzc{Z}}
\newcommand\SL{\mathbf{SL}}
\newcommand\SU{\mathbf{SU}}
\newcommand\CP{\mathbb{CP}}
\newcommand\Gr{\mathpzc{Gr}}
\newcommand\Bf{\mathbf{B}}
\newcommand\bt{\mathfrak{b}}
\newcommand\nt{\mathfrak{n}}
\newcommand\Mf{\mathpzc{M}}
\newcommand{\Ef}{\mathbf{E}}
\newcommand\slt{\mathfrak{sl}}
\newcommand\HNR{$\mathrm{HNR}$}
\newcommand\Qf{\mathbf{Q}}
\newcommand\fg{\mathfrak{f}}
\newcommand\Fb{\mathbf{F}}
\newcommand\hf{\mathpzc{h}}
   \def\DHLhksqrt#1#2{\setbox0=\hbox{$#1\sqrt{#2\,}$}\dimen0=\ht0
     \advance\dimen0-0.2\ht0
     \setbox2=\hbox{\vrule height\ht0 depth -\dimen0}%
     {\box0\lower0.4pt\box2}}
\title[Orbits of real forms and Matsuki duality]
{Orbits of real forms, Matsuki duality and 
{\texorpdfstring{$CR$}{TEXT}-cohomology}}
\author[S.Marini]{Stefano Marini}
\address{S.~Marini: Dipartimento di Matematica e Fisica, 
III Universit\`a di Roma, 
Largo San Leonardo Murialdo, 1
00146, Roma (Italy) }
\email{marinistefano86@gmail.com}
\author[M.~Nacinovich]{Mauro Nacinovich}
\address{M.\ Nacinovich:
Dipartimento di Matematica\\ II Universit\`a di Roma
``Tor Ver\-ga\-ta''\\ Via della Ricerca Scientifica\\ 00133 Roma
(Italy)}
\email{nacinovi@mat.uniroma2.it}
\date{November 29, 2016}
\subjclass[2000]{Primary: 32V30
Secondary:  32V25, 32V35,  
 53C30}
\keywords{homogeneous $CR$-manifold, $CR$-embedding, 
Matsuki duality, 
tangential Cauchy-Riemann complex, Dolbeault cohomology}
\begin{document}
%
\maketitle
\tableofcontents
This paper gives an overview on some topics concerning compact homogeneous
$CR$ manifolds,  
that have been developed in the last few years. 
The algebraic structure of 
compact Lie group  was employed
in \cite{AMN2013} to show that a large class of
compact $CR$ manifolds can be viewed as the total spaces of fiber 
bundles over  complex flag manifolds,
generalizing the classical Hopf fibration for the odd
dimensional sphere and the
Boothby-Wang fibration for homogeneous 
compact contact manifolds (see \cite{BW58}).
If a compact group $\Kf_0$  acts as a transitive group
of $CR$ diffeomorphisms of a $CR$ manifold $M_0,$ which
 is $\nt$-reductive in the sense of \cite{AMN2013},
one can construct a homogeneous space $M_-=\Kf/\Vf$ of the 
complexification $\Kf$ of $\Kf_0$ such that the map $M_0\to{M}_-$
associated to the inclusion $\Kf_0\hookrightarrow\Kf$
is a generic $CR$ embedding. The manifold $M_-$ is algebraic
over $\C$ and a tubular neighborhood of $M_0.$ 
For instance, if $M_0$ is the sphere $S^{2n-1}$ in $\C^n$ and $\Kf_0=\SU(n),$
the embedding $M_0\hookrightarrow{M}=\CP^{n}$  
is useful to compute the maximal group of $CR$ automorphisms of $M_0$
(see \cite{Cart1932a,Cart1932b}), while the embedding $M_0\hookrightarrow{M}_-=\C^n\setminus\{0\}$
better reflects the topology and the \textit{$CR$ cohomology} of $M_0.$ 
Thus, for some aspects of $CR$ geometry, we can consider 
$M_-$ to be  the \textit{best} complex realization of $M_0.$ This 
is the essential contents of the PhD thesis of the first Author
(\cite{Marini}): his aim was to show that, in 
a range which depends on the pseudoconcavity
of $M_0,$ the groups of 
tangential Cauchy-Riemann cohomology of $M_0$ are isomorphic
to the corresponding Dolbeault cohomology groups of $M_-.$ 
The class of compact homogeneous $CR$ manifolds to which this theory
applies includes the intersections of Matsuki dual orbits 
 in complex flag manifolds (cf. \cite{Mats88}).
\par 
The paper is organized as follows.
In the first section we discuss 
some basic facts
on compact homogeneous $CR$ manifolds, including Matsuki duality
and the notion of $\nt$-reductivity. The second section describes
$\Kf_0$-covariant fibrations $M_-\to{M}_0$ and,
under a special 
assumption on the partial complex structure
of $M_0$ (that we call {\HNR}), 
results on the cohomology, 
which
in part are contained in \cite{Marini} and 
have been further developed
in \cite{MaNa}. In a final section we discuss a simple example 
of $\nt$-reductive compact $CR$ manifolds which are intersections
of Matsuki dual orbits. 

\section{Compact homogeneous \texorpdfstring{$CR$}{TEXT} manifolds}
A \emph{compact homogeneous $CR$ manifold} is a $CR$ manifold $M$ on which a compact Lie group
$\Kf_0$ acts transitively as a group of $CR$ diffeomorphisms. Its $CR$ structure is uniquely determined
by the datum, for the choice of a \textit{base point} 
$p_0$ of $M$, of the $CR$ algebra $(\kt_0,\vt)$, where 
$\kt_0=\Lie(\Kf_0)$ and $\vt=d\piup^{-1}(T^{0,1}_{p_0}M),$ for the complexification $d\piup$ of the
differential of the canonical projection $\piup:\Kf_0\ni{x}\to{x}\cdot{p_0}\in{M}.$ We recall that, by the 
formal integrability
of the partial complex structure of $M,$ the subspace $\vt$ is in fact a Lie subalgebra of the complexification 
$\kt$ of $\kt_0.$ These pairs where introduced in \cite{MN05} to discuss homogeneous $CR$ manifolds
and the compact case was especially investigated in \cite{AMN2013}.
\subsection{Lie algebras of compact Lie groups} 
Compact Lie algebras are characterized in \cite{Bou82} by  
\begin{prop}
 For a real Lie algebra $\gtu$ the following are equivalent: 
\begin{enumerate}
 \item $\gtu$ is the Lie algebra of a compact Lie group;
 \item the analytic Lie subgroup $\Ind(\gtu)$ of $\GL_{\R}(\gtu)$ with Lie algebra $\ad(\gtu)$ is compact;
 \item on $\gtu$ a symmetric bilinear form can be defined which is invariant and positive definite;
 \item $\gtu$ is reductive, i.e. its adjoint representation is semi-simple and, for every $X\in\gtu$
 the endomorphism $\ad(X)$ is semisimple with purely imaginary eigenvalues;
 \item $\gtu$ is reductive with a negative semidefinite Killing form.\qed
\end{enumerate}
\end{prop}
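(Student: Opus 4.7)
My plan is to establish the five equivalences by running the cyclic chain $(1)\Rightarrow(2)\Rightarrow(3)\Rightarrow(4)\Rightarrow(5)\Rightarrow(1)$. The first four steps rest on two standard devices: averaging a bilinear form over a compact group, and reading $\ad$-invariance of a positive definite form as skew-symmetry of each $\ad(X)$.

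For $(1)\Rightarrow(2)$ I would observe that $\Ind(\gtu)$ is the identity component of the image of the adjoint representation $\Ad$ of any compact Lie group $\Gfu$ with Lie algebra $\gtu$, hence a continuous image of a compact group, hence compact. For $(2)\Rightarrow(3)$, averaging an arbitrary positive definite symmetric bilinear form on $\gtu$ against the normalised Haar measure of $\Ind(\gtu)$ produces an $\Ind(\gtu)$-invariant, and therefore $\ad(\gtu)$-invariant, positive definite symmetric form $\kil$. For $(3)\Rightarrow(4)$, the identity $\kil(\ad(X)Y,Z)=-\kil(Y,\ad(X)Z)$ shows that each $\ad(X)$ is skew-symmetric with respect to the Euclidean structure $\kil$, hence diagonalisable over $\C$ with purely imaginary eigenvalues; moreover the orthogonal complement of any $\ad$-invariant subspace is itself $\ad$-invariant, so the adjoint representation is completely reducible and $\gtu$ is reductive. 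For $(4)\Rightarrow(5)$, decompose $\gtu=\zt(\gtu)\oplus[\gtu,\gtu]$: the Killing form vanishes on the centre, while on the derived ideal each $\ad(X)$ has purely imaginary eigenvalues by assumption, so $\ad(X)^2$ has non-positive real spectrum, giving $\trac(\ad(X)^2)\leq 0$. Hence the Killing form is negative semidefinite on $\gtu$.

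The main obstacle is $(5)\Rightarrow(1)$. Write $\gtu=\zt(\gtu)\oplus\ig$ with $\ig=[\gtu,\gtu]$ semisimple, granted by reductivity. By Cartan's criterion the Killing form $B$ of $\ig$ is nondegenerate, and being the restriction of a negative semidefinite form it is in fact negative definite on $\ig$. Thus $\ad(\ig)$ is contained in the skew-symmetric endomorphisms of $\ig$ with respect to the Euclidean form $-B|_{\ig}$, and the analytic subgroup $\Ind(\ig)\subset\GL_\R(\ig)$ is contained in the corresponding compact orthogonal group. The crucial and non-trivial input is that $\Ind(\ig)$ is \emph{closed} in $\GL_\R(\ig)$: mere containment in a compact set is not enough, since an analytic subgroup of a Lie group need not be closed. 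This closedness for the adjoint group of a semisimple Lie algebra is the theorem of H.~Weyl which lies at the heart of the matter; it is proved via the algebraic character of the adjoint group, or equivalently by the unitary trick. Once closedness is granted, $\Ind(\ig)$ is a compact Lie group with Lie algebra $\ad(\ig)\cong\ig$, and the direct product $\Tf^d\times\Ind(\ig)$, where $\Tf^d$ is a $d$-torus with $d=\dim\zt(\gtu)$, realises $\gtu$ as the Lie algebra of a compact Lie group, closing the cycle.
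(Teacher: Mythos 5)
Your proof is correct. Note, however, that the paper offers no argument of its own for this proposition: it is quoted as a known characterization of compact Lie algebras with a reference to Bourbaki (\cite{Bou82}) and closed immediately with a \qed, so there is no ``paper proof'' to compare against. Your cyclic chain $(1)\Rightarrow(2)\Rightarrow(3)\Rightarrow(4)\Rightarrow(5)\Rightarrow(1)$ is the standard route and each link is sound: the averaging argument for $(2)\Rightarrow(3)$, the skew-symmetry/complete-reducibility argument for $(3)\Rightarrow(4)$, and the spectral computation $\mathrm{trace}(\ad(X)^2)\leq 0$ for $(4)\Rightarrow(5)$ are all as they should be. You also correctly isolate the one genuinely delicate point, in $(5)\Rightarrow(1)$: containment of the analytic subgroup $\Ind(\ig)$ in the orthogonal group of $-B|_{\ig}$ does not by itself give compactness, and closedness must be supplied separately, e.g.\ from the fact that for semisimple $\ig$ all derivations are inner, so $\Ind(\ig)$ is the identity component of the algebraic (hence closed) group $\Aut(\ig)$. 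Two cosmetic remarks: the step $(4)\Rightarrow(5)$ needs no decomposition into centre plus derived ideal, since the hypothesis on $\ad(X)$ is assumed for every $X\in\gtu$; and in $(5)\Rightarrow(1)$ you implicitly use that the Killing form of $\gtu$ restricts on the ideal $\ig=[\gtu,\gtu]$ to the Killing form of $\ig$, which is true for any ideal but is worth stating.
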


\subsection{Complex flag manifolds} Let $\Gfu$ be a compact Lie group, with Lie algebra $\gtu$. The 
negative of the trace form of
a faithful representation of $\gtu$ yields an invariant scalar product $\kil$ on $\gtu$, that we use to
identify $\gtu$ with its
dual $\gtu^*$. In particular, the coadjoint orbits of $\Gfu$ are canonically
isomorphic to the adjoint orbits in $\gtu$. 
We follow \cite[\S{5.2}]{Kir}. Fix an element $H_0\in\ttu$ with $\ad(H_0)$ of maximal rank.
Then the commutant $$\ttu=\{H\in\gtu\mid [H_0,H]=0\}$$
is a maximal torus of $\gtu$. Denote by $\Tf_{H_0}=\{x\in\Gfu\mid\Ad(x)(H_0)=H_0\}$ 
the corresponding torus of $\Gfu$. The Weyl group $W_{H_0}$ is the quotient of the normalizer
$N_{\Gfu}(\ttu)=\{x\in\Gfu\mid\Ad(x)(\ttu)\subset\ttu\}=\{x\in\Gfu\mid [\Ad(x)(H_0),H_0]=0\}$ 
with respect to $\Tf_{H_0}$. \par 
We set $\gt=\C\otimes_{\R}\gtu$, 
$\hg_{\R}=i\ttu$ and denote by $\Rad$ the set of nonzero $\alpha\in\hg_{\R}^*$ such that 
$\gt^\alpha=\{X\in\gt\mid [H,X]=\alpha{X},\;\forall H\in\hg_{\R}\}\neq\{0\}$. 
We choose a system of simple roots $\alpha_1\hdots,\alpha_\ell$ in $\Rad$ 
with $\alpha_j(iH_0)>0$ for $j=1,\hdots,\ell$,
and denote by
\begin{equation*}
 C(H_0)=\{X\in\hg_{\R}\mid \alpha_i(X)>0,\;1\leq{i}\leq{\ell}\}
\end{equation*}
the corresponding positive Weyl chamber. 
\begin{lem}
 Every adjoint orbit $M$ in $\gtu$ intersects $iC(H_0)$
 in exactly one point. 
\end{lem}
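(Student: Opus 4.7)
The plan is to prove existence and uniqueness separately, with the usual caveat that the statement should be read with $\overline{iC(H_0)}$ in place of $iC(H_0)$: it is a closed Weyl chamber, not the open one, which is a strict fundamental domain for $W_{H_0}$ on $\hg_{\R}$. Throughout I use that $\kil$ is $\Ad$-invariant, hence $\ad(H_0)$ is $\kil$-skew, and that $\ker\ad(H_0)=\ttu$ because $\ad(H_0)$ has maximal rank.

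For existence, I would first show that every adjoint orbit meets $\ttu$. Fix $X\in\gtu$ and consider the function $\phi\colon\Gfu\ni y\mapsto\kil(H_0,\Ad(y)X)$; by compactness it attains a minimum at some $y_0$, and differentiating $t\mapsto\phi(\exp(tY)y_0)$ at $t=0$ yields
\begin{equation*}
\kil(H_0,[Y,\Ad(y_0)X])=0\qquad\forall\,Y\in\gtu.
\end{equation*}
Invariance of $\kil$ rephrases this as $\Ad(y_0)X\perp[H_0,\gtu]$; since $\ad(H_0)$ is $\kil$-skew, its image equals $\ttu^{\perp}$, so $X_0:=\Ad(y_0)X\in\ttu$. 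Then $iX_0\in\hg_{\R}$, and because the closed chambers $w\cdot\overline{C(H_0)}$ cover $\hg_{\R}$ as $w$ ranges over $W_{H_0}$, a representative $n\in N_{\Gfu}(\ttu)$ of an appropriate element of $W_{H_0}$ moves $X_0$ into $\overline{iC(H_0)}$.

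For uniqueness, the key step is to reduce $\Ad(\Gfu)$-conjugacy on $\ttu$ to $W_{H_0}$-conjugacy. If $Y=\Ad(g)X$ with $X,Y\in\ttu$, then $\ttu$ and $\Ad(g)\ttu$ are both maximal tori of the identity component of the centralizer of $Y$ in $\Gfu$; that centralizer is compact, so by the classical theorem that any two maximal tori in a compact connected Lie group are conjugate, there is $h$ centralizing $Y$ with $\Ad(h)\Ad(g)\ttu=\ttu$, and then $hg$ represents an element of $W_{H_0}$ sending $X$ to $Y$. Granted this, the lemma reduces to the familiar fact that $\overline{iC(H_0)}$ meets each $W_{H_0}$-orbit in $\ttu$ in exactly one point.

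The only non-formal ingredient is the last reduction, which depends on the compactness-sensitive theorem of conjugacy of maximal tori; everything else follows mechanically from invariance of $\kil$, the regularity of $H_0$, and the simply transitive action of $W_{H_0}$ on the chambers of $\hg_{\R}$.
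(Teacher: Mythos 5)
Your proof is correct and follows essentially the same route as the paper: critical points of $X\mapsto \kil(X,H_0)$ on the compact orbit land in $\ttu$, and transitivity of the Weyl group on $M\cap\ttu$ gives uniqueness. You are in fact more careful than the paper on the two points it glosses over --- the reduction of $\Gfu$-conjugacy inside $\ttu$ to $W_{H_0}$-conjugacy via conjugacy of maximal tori in the centralizer, and the observation that the statement is only true for the closed chamber $\overline{iC(H_0)}$ (orbits of singular elements never meet the open one).
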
 
\begin{proof} Let $f(X)=\kil(X,H_0)$. Since $M$ is compact, $f$ has  stationary points on $M$.
A stationary point $X_0$ is characterized by  
\begin{equation*} 
 df(X_0)=0\Longleftrightarrow 0=\kil([X,X_0],H_0)=\kil(X,[X_0,H_0]),\; \forall X\in\gtu\Longleftrightarrow 
 X_0\in\ttu.
\end{equation*}
This shows that  the intersection $M\cap\ttu$ is not empty and all its
points are critical  for~$f$.  On the other hand, if $\Ad(x)(X_0)\in\ttu$, we can find $x'\in\Gfu$ with
$\Ad(x')(X_0)=\Ad(x)(X_0)$ and $\Ad(x)(\ttu)=\ttu$, so that the Weyl group is transitive on $M\cap\ttu$.
\end{proof}
\begin{prop}
 Let $\Gfu$ be a connected compact Lie group. Then: 
\begin{enumerate}
 \item For each $\Upsilon\in\gtu$ the stabilizer $\Sf(\Upsilon)=\{x\in\Gfu\mid \Ad(x)(\Upsilon)=\Upsilon\}$
 of $\Upsilon$ 
 contains a maximal torus of $\Gfu$.
 \item If $\Tf_0$ is any fixed
  maximal torus in $\Gfu$, then there are finitely many subgroups $\mathbf{A}_0$ with
 $\Tf_0\subseteq\mathbf{A}_0\subseteq\Gfu$.
 \item There is a unique maximal orbit $M$ 
 whose stabilizer at each point is a maximal torus of $\gtu$.
 \qed
\end{enumerate}
\end{prop}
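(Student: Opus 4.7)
The plan is to treat the three items in sequence, in each case exploiting the preceding lemma and the root space decomposition of $\gt$ with respect to the Cartan subalgebra $\hg=\C\otimes\ttu$.

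For~(1), given $\Upsilon\in\gtu$, I would first apply the lemma to its adjoint orbit to produce $x_0\in\Gfu$ with $\Ad(x_0^{-1})\Upsilon\in\ttu$. Since $\ttu$ is abelian, $\Tf_{H_0}$ stabilizes $\Ad(x_0^{-1})\Upsilon$; conjugating by $x_0$ shows that the maximal torus $x_0\Tf_{H_0}x_0^{-1}$ is contained in $\Sf(\Upsilon)$, proving~(1).

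For~(2), I would start with a closed subgroup $\mathbf{A}_0$ satisfying $\Tf_0\subseteq\mathbf{A}_0\subseteq\Gfu$ and denote its identity component by $\mathbf{A}_0^\circ$. The Lie algebra of $\mathbf{A}_0^\circ$ contains $\ttu$ and is $\ad(\ttu)$-stable, so its complexification has the form $\hg\oplus\bigoplus_{\alpha\in S}\gt^{\alpha}$ for some subset $S\subseteq\Rad$; reality forces $S=-S$, and being a Lie subalgebra forces $S$ to be additively closed in $\Rad$, leaving only finitely many options for $\mathbf{A}_0^\circ$. Next, any $g\in\mathbf{A}_0$ conjugates $\Tf_0$ to a maximal torus of $\mathbf{A}_0^\circ$, so by conjugacy of maximal tori inside $\mathbf{A}_0^\circ$ there is $h\in\mathbf{A}_0^\circ$ with $hg\in N_{\Gfu}(\Tf_0)$; hence $\mathbf{A}_0\subseteq\mathbf{A}_0^\circ\cdot N_{\Gfu}(\Tf_0)$ and $\mathbf{A}_0/\mathbf{A}_0^\circ$ injects into a quotient of the finite Weyl group $W_{H_0}$. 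Together, these yield only finitely many $\mathbf{A}_0$.

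For~(3), I would call $\Upsilon\in\ttu$ \emph{regular} when $\alpha(i\Upsilon)\neq0$ for every $\alpha\in\Rad$. For such $\Upsilon$ the kernel of $\ad(\Upsilon)$ in $\gt$ is exactly $\hg$, so the Lie algebra of $\Sf(\Upsilon)$ is $\ttu$; the full stabilizer then lies in $N_{\Gfu}(\Tf_0)$, and because the Weyl group acts freely on the regular locus of $\ttu$ the stabilizer is exactly $\Tf_0$. By~(1) no stabilizer can have smaller dimension, so orbits of regular elements have the maximal possible dimension $\dim\Gfu-\dim\Tf_0$ and are all $\Gfu$-equivariantly diffeomorphic to the full flag manifold $\Gfu/\Tf_0$; conjugacy of maximal tori then yields the uniqueness of this maximal orbit type.

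The main obstacle I anticipate is the bookkeeping in~(2): one must verify carefully that the correspondence between closed connected subgroups containing $\Tf_0$ and additively-closed, symmetric subsets $S\subseteq\Rad$ is genuinely finite-to-finite, and that non-identity cosets inside $\mathbf{A}_0$ are controlled by the finite Weyl group rather than contributing unboundedly. This is essentially routine once the framework is in place, but it needs to be made explicit rather than taken for granted.
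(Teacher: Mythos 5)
The paper offers no proof of this proposition --- it is stated with a terminal \qed as a standard fact from the theory of compact Lie groups (following Kirillov's treatment), so there is nothing to compare your argument against; it can only be judged on its own merits. On that score it is correct and is essentially the canonical argument. Item (1) correctly combines the preceding lemma (every orbit meets $\ttu$) with the observation that the connected group $\exp(\ttu)$ acts trivially on $\ttu$ via $\Ad$. Item (2) is the Borel--de Siebenthal-type count: the identity component is pinned down by a symmetric, additively closed subset $S\subseteq\Rad$, and the component group is controlled by $N_{\Gfu}(\Tf_0)$; the bookkeeping you flag does close up, since $\mathbf{A}_0=(\mathbf{A}_0\cap N_{\Gfu}(\Tf_0))\cdot\mathbf{A}_0^\circ$ and subgroups of $N_{\Gfu}(\Tf_0)$ containing $\Tf_0$ biject with subgroups of the finite Weyl group, so each $\mathbf{A}_0$ is determined by a pair ranging over finite sets. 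Item (3) is also right, with ``unique maximal orbit'' correctly read as uniqueness of the principal orbit type $\Gfu/\Tf_0$.

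Two small points worth making explicit. In (2) you silently assume $\mathbf{A}_0$ is closed; that is the intended reading (it is how the source you are implicitly following states it), but say so. In (3), an alternative to the Weyl-group freeness argument for showing the stabilizer is \emph{exactly} a maximal torus, with no extra components, is to note that $\Sf(\Upsilon)$ coincides with the centralizer of the torus $\overline{\exp(\R\Upsilon)}$, and centralizers of tori in connected compact groups are connected; this also shows in general that dimension-maximal orbits automatically have stabilizer equal to (not merely containing with finite index) a maximal torus, which is the link between ``maximal orbit'' and the stabilizer condition in the statement.
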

\begin{defn}
 A \emph{flag manifolds} of $\Gfu$ is an orbit of its adjoint 
action on $\gtu$.
\end{defn}
\begin{thm}\label{flagthm}
 On a flag manifold $M$ of $\Gfu$ it is possible to define a complex structure and a $\Gfu$-invariant
 K\"ahler structure. \par 
 If $\Tf$ is a maximal torus of $\Gfu$, 
 the Weyl group $\Wg(\Tf,\Gfu)$ act transitively on the $\Gfu$-invariant complex structures of $M$. 
\end{thm}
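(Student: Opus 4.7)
The plan is to realise $M$ as an adjoint orbit through a point in the closure of the positive Weyl chamber, identify it with a homogeneous space $\Gf/\Qf$ of the complexification, and obtain the K\"ahler structure from the Kirillov--Kostant--Souriau construction.

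First I would pick $\Upsilon\in M\cap\overline{iC(H_0)}$ via the preceding lemma, so that $\Sf(\Upsilon)\supseteq\Tf_{H_0}$ and
\[
\mathfrak{s}(\Upsilon)^{\C}\;=\;\hg\oplus\bigoplus_{\alpha(\Upsilon)=0}\gt^{\alpha}.
\]
Setting
\[
\qt\;=\;\mathfrak{s}(\Upsilon)^{\C}\oplus\bigoplus_{\alpha(-i\Upsilon)>0}\gt^{\alpha},\qquad \Qf\;=\;\N_{\Gf}(\qt),
\]
$\qt$ is a parabolic subalgebra of $\gt$ containing the Borel adapted to $C(H_0)$, and $\Qf$ is a parabolic complex subgroup of $\Gf$, so $\Gf/\Qf$ is a compact complex projective manifold. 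I would show that the natural map $M=\Gfu/\Sf(\Upsilon)\to\Gf/\Qf$ is a $\Gfu$-equivariant diffeomorphism, using $\Gfu\cap\Qf=\Sf(\Upsilon)$ at the Lie-algebra level together with the Iwasawa-type decomposition $\Gf=\Gfu\cdot\Qf$, and pull back the complex structure of $\Gf/\Qf$ to obtain a $\Gfu$-invariant one on $M$.

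For the K\"ahler form I would take the Kirillov--Kostant--Souriau two-form
\[
\omega_{\Upsilon}(\bar X,\bar Y)\;=\;\kil(\Upsilon,[X,Y]),\qquad X,Y\in\gtu,
\]
where $\bar X$ is the fundamental vector field of $X$ at $\Upsilon$. The $\ad$-invariance of $\kil$ gives closedness, and the transitive action extends $\omega$ to a $\Gfu$-invariant symplectic form. On $T_{\Upsilon}M\otimes\C\cong\bigoplus_{\alpha(\Upsilon)\neq 0}\gt^{\alpha}$ both $J$ and $\omega$ act diagonally in root blocks, and I would verify that $g(X,Y)=\omega(JX,Y)$ is positive definite because the eigenvalue of $J$ on $\gt^{\alpha}$ has the same sign as $\alpha(-i\Upsilon)$.

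For the transitivity of $\Wg(\Tf,\Gfu)$, any $\Gfu$-invariant complex structure $J'$ on $M$ is encoded by a parabolic subalgebra $\qt'\subset\gt$ satisfying $\qt'\cap\overline{\qt'}=\mathfrak{s}(\Upsilon)^{\C}$, $\qt'+\overline{\qt'}=\gt$ and $[\qt',\qt']\subset\qt'$. Since $\hg\subset\mathfrak{s}(\Upsilon)^{\C}\subset\qt'$, every such $\qt'$ is the sum of $\hg$ with a parabolic subset of $\Rad$ whose Levi part equals $\{\alpha:\alpha(\Upsilon)=0\}$. Standard Lie-theoretic bookkeeping identifies this collection with the set of Weyl chambers of $\hg_{\R}$ modulo the Weyl subgroup of the Levi; since $\Wg(\Tf,\Gfu)$ acts simply transitively on chambers, via conjugation by lifts in $\N_{\Gfu}(\Tf)$, it acts transitively on the $\Gfu$-invariant complex structures of $M$. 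The main technical obstacles I expect are the global decomposition $\Gf=\Gfu\cdot\Qf$ underlying the diffeomorphism $M\cong\Gf/\Qf$, and the combinatorial match between parabolic subalgebras with prescribed Levi and chambers modulo the Levi's Weyl group needed for the last step.
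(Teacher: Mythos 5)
Your construction of the complex structure and of the K\"ahler metric is correct and follows the same route as the paper's (very terse) proof: realize $M$ as $\Gf/\Qf$ for a parabolic $\Qf$ attached to the representative of $M$ in $i\overline{C(H_0)}$, and pull back the complex structure. The paper does not construct the K\"ahler form at all, so the Kirillov--Kostant--Souriau argument, the decomposition $\Gf=\Gfu\cdot\Qf$, and the positivity check on the root spaces are genuine additions and are workable as you outline them.

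The last step, however, has a concrete gap. The set of parabolic subalgebras $\qt'\supset\hg$ with prescribed reductive part $\mathfrak{l}=\mathfrak{s}(\Upsilon)^{\C}$ is \emph{not} in bijection with the set of Weyl chambers modulo $W_{\mathfrak{l}}$: already for $\gt=\slt_3(\C)$ and $\mathfrak{l}$ of type $A_1$ there are exactly two such parabolics, namely $\qt$ and its opposite $\bar{\qt}$, while $|W|/|W_{\mathfrak{l}}|=3$. The correct parametrization is by chambers of the system of restricted ($T$-)roots on the center of $\mathfrak{l}$, and the group that acts on them is $N_{W}(W_{\mathfrak{l}})/W_{\mathfrak{l}}$, not all of $W$; this group need not act transitively. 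Concretely, for $M=\CP^2=\SU(3)/\mathbf{S}(\Ub(1)\times\Ub(2))$ the two invariant complex structures are $J$ and $-J$, coming from $\qt$ and $\bar{\qt}$; the subgroup of $W=S_3$ preserving $\mathfrak{l}$ is generated by the reflection in the Levi root and it fixes both $\qt$ and $\bar{\qt}$, so no Weyl group element interchanges them. Hence transitivity does not follow from your chamber count, and the statement as literally formulated only holds in the full-flag case $\mathfrak{l}=\hg$ (where invariant complex structures correspond $W$-equivariantly and bijectively to Weyl chambers and $W$ acts simply transitively) or after reinterpreting ``complex structure'' up to conjugation. To be fair, the paper's own proof asserts this transitivity with no argument either, but as a proof your final step needs to be replaced by the restricted-root analysis, or the claim needs to be weakened accordingly.
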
 
\begin{proof}
Fix a point $p_0$ of $M$, corresponding to $\Upsilon_0\in\gtu$. The stabilizer $\Sf_{\Gfu}(\Upsilon_0)$ contains
a maximal torus $\Tf$ and therefore there are finitely many parabolic subalgebras $\qt$ 
of the complexification $\gt$ of $\gtu$ with
$\qt\cap\gtu=\sta_{\gtu}(\Upsilon_0)$
(the Lie algebra of $\Sf_{\Gfu}(\Upsilon_0)$). Each $\qt$ corresponds to a different $\Gfu$-invariant complex structure
on $M$, and the Weyl group $\Wg(\Tf,\Gfu)$ act transitively on the $\qt$'s.
\end{proof}
Let $\Gf,$ with Lie algebra $\gt,$ be the complexification of $\Gf_u.$
The analytic subgroup $\Qf$ corresponding to the subalgebra $\qt$
in the proof of Theorem~\ref{flagthm} is parabolic and 
the complex structure of $M$ is also defined by 
its representation $M\simeq\Gf/\Qf$ as a complex homogeneous space.
Vice versa, if $\Qf$ is parabolic in $\Gf,$ the homogeneous space
$\Gf/\Qf$ is $\Gf_u$-diffeomorphic to a flag manifold of 
its compact form $\Gf_u.$ 
\subsection{Matsuki's Dual Orbits} \label{matsdual}
Let $\Gf_0$ be any real form 
and $\Gf_u$ a compact form of of a semisimple complex Lie group
$\Gf,$ with $\Kf_0=\Gf_0\cap\Gf_u$
a maximal compact subgroup of $\Gf_0$ and $\sigmaup,$
$\tauup$ the commuting
conjugations on $\Gf$, and on its Lie algebra
$\gt,$ with respect to  $\Gf_0$ and $\Gf_u,$
respectively.
The composition $\thetaup=\sigmaup\circ\tauup$ is the complexification
of a Cartan involution of $\Gf_0$ (and of its Lie algebra $\gt_0$)
commuting wiht
 $\sigmaup$ and $\tauup.$
The complexification $\Kf$ of $\Kf_0$ is the subgroup
$\Gf^{\thetaup}$ of the elements $x\in\Gf$ which are fixed by
$\thetaup.$ 
\par 
Let $\Qf$ be a parabolic subgroup of $\Gf$ and consider the actions
of the Lie groups
$\Gf_0,$ $\Kf,$ $\Kf_0$ on the complex flag manifold $M=\Gf/\Qf.$

\begin{ntz} For $p\in{N}=\Gf/\Qf$, let us set  
\begin{align}
\label{M+} 
M_+(p) & = \Gf_0\cdot{p}\simeq \Gf_0/\Ef_0,  
&&\text{with $\Ef_0=\{x\in\Gf_0\mid x\cdot{p}=p\},$}\\
\label{M-} M_-(p)&=\Kf\cdot{p} \simeq \Kf/\Vf,
&&\text{with $\Vf=\{z\in\Kf\mid z\cdot{p}=p\}$},\\
\label{M0} M_0(p)&=\Kf_0\cdot{p}=\Kf_0/\Vf_0, &
&\text{with $\Vf_0=\{x\in\Kf_0\mid x\cdot{p}=p\}$.}\end{align}
We note that $M_+(p)$ are  real, $M_-(p)$  complex and
$M_0(p)$  compact submanifolds of $M,$ and 
denote by $\Mf_+,\Mf_-,\Mf_0$ the sets of orbits of $\Gf_0,\Kf,\Kf_0$
in $M,$
respectively.
\end{ntz} 
We know (see \cite{Mats79, Mats88, Zie}) that 
\begin{thm}\label{numfinito}
There are  
finitely many orbits in $\Mf_+$ and in $\Mf_-.$
\par 
There is a one-to-one correspondence $\Mf_+\leftrightarrow\Mf_-$ 
such that $M_+(p_+)$ and $M_-(p_-)$ are related if and only if
$M_+(p_+)\cap{M}_-(p_-)=M_0(p_0)\in\Mf_0$ 
(Matsuki duality).
\end{thm}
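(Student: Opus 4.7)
The plan is to follow the moment-map proof of Matsuki duality due to Mirkovi\'c--Uzawa--Vilonen. By Theorem~\ref{flagthm} the flag manifold $M=\Gf/\Qf$ carries a $\Gf_u$-invariant K\"ahler form $\omega$, which restricts to a $\Kf_0$-invariant K\"ahler form. Let $\mu\colon M\to\kt_0^*$ be a moment map for the $\Kf_0$-action and identify $\kt_0^*$ with $\kt_0$ via the invariant inner product $\kil$. The smooth $\Kf_0$-invariant function $\phi=\tfrac12\|\mu\|^2\colon M\to\R$ serves as a Kempf--Ness functional: the moment-map identity shows that $p$ is critical for $\phi$ if and only if $\mu(p)\in\kt_0$ fixes $p$, and a standard Morse-theoretic argument gives that $\mathrm{Crit}(\phi)$ is a finite disjoint union of $\Kf_0$-orbits.

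Finiteness of $\Mf_+$ is Wolf's theorem: by conjugation inside $\Gf_0$ one reduces the orbit count to a finite computation with $\sigmaup$-stable Cartan subalgebras of $\gt$ and their Weyl-group double cosets. Finiteness of $\Mf_-$ uses that the pair $(\Gf,\Kf)$ is symmetric, hence spherical, giving a finite Bruhat-type decomposition $\Gf=\bigsqcup_{w}\Kf\,w\,\Qf$.

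To construct the bijection I would use gradient flow. Each $M_+\in\Mf_+$ is $\Gf_0$-invariant and in particular $\Kf_0$-invariant; the restriction of $\mathrm{grad}\,\phi$ to $M_+$ is generated infinitesimally by a vector in $\po$, so the downward gradient flow preserves $M_+$. By the \L{}ojasiewicz convergence theorem applied to the real-analytic Morse--Bott function $\phi|_{M_+}$, every trajectory converges, and its limit lies in a single critical $\Kf_0$-orbit $C_+(M_+)\subseteq\mathrm{Crit}(\phi)$. Symmetrically, on a $\Kf$-orbit $M_-$ the gradient is generated by a vector in $i\kt_0\subseteq\kt$, which assigns to $M_-$ a critical $\Kf_0$-orbit $C_-(M_-)$. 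Both $C_+$ and $C_-$ are bijections onto the finite set $\mathrm{Crit}(\phi)/\Kf_0$, with inverses given by $\Gf_0$- and $\Kf$-saturation of critical orbits. One declares $M_+\leftrightarrow M_-$ precisely when $C_+(M_+)=C_-(M_-)$.

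It remains to identify this pairing via the intersection formula. For $p_0\in C_+(M_+)=C_-(M_-)$ the inclusion $M_0(p_0)\subseteq M_+\cap M_-$ is immediate from $\Kf_0\subseteq\Gf_0\cap\Kf$. The reverse inclusion is the heart of the argument: the isotropy $\gt^{p_0}$ is $\thetaup$-stable, decomposing as $(\kt\cap\gt^{p_0})\oplus(\pg\cap\gt^{p_0})$, and a polar decomposition argument inside $\Gf_0$ then forces $\Gf_0\cdot p_0\cap\Kf\cdot p_0=\Kf_0\cdot p_0$. The step I expect to be the main obstacle is this local Cartan-decomposition argument at critical points and its globalization, which establishes the required clean intersection and constitutes the technical core of Matsuki's original proof in \cite{Mats88}.
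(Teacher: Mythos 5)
Your proposal takes a genuinely different route from the paper. The paper argues algebraically: it represents the orbits in $\Mf_{\pm}$ by parabolic subalgebras containing $\thetaup$-stable Cartan subalgebras, deduces finiteness from the finiteness of $\Gf_0$- (resp.\ $\Kf$-) conjugacy classes of Cartan subalgebras, obtains the duality from Lemma~\ref{lemduality} (two Borel subalgebras containing $\thetaup$-stable Cartan subalgebras of $\gt_0$ that are $\Gf_0$- or $\Kf$-conjugate are already $\Kf_0$-conjugate, proved via the Cartan decomposition $\Gf_0=\exp(\pt_0)\cdot\Kf_0$), and then passes from $\Gf/\Bf$ to $\Gf/\Qf$ by the natural fibration. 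You instead sketch the Mirkovi\'c--Uzawa--Vilonen moment-map argument. That is a legitimate alternative, but as written it is a plan rather than a proof, and several of its load-bearing steps are asserted without justification.

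Concretely: (1) For your $\phiup=\tfrac12\|\muup\|^2$ built from the $\Kf_0$-moment map, the gradient at $p$ is generated by $i\muup(p)\in i\kt_0\subset\kt$, so tangency to $\Kf$-orbits is clear, but $i\kt_0\not\subset\gt_0$, so your claim that $\mathrm{grad}\,\phiup|_{M_+}$ is ``generated by a vector in $\po$'' does not follow from the definition. It is true, but only via the additional observation that the full $\Gf_u$-moment map has constant norm on the coadjoint orbit, so that $\|\muup_{\kt_0}\|^2$ and $\|\muup_{i\pt_0}\|^2$ differ by a constant and a sign; without this identity the whole mechanism (one flow preserving both orbit systems) collapses. (2) The assertion that $\mathrm{Crit}(\phiup)$ is a \emph{finite} union of $\Kf_0$-orbits is not ``a standard Morse-theoretic argument''; in the moment-map proof this finiteness is either taken as input from the orbit-finiteness theorems or requires a separate argument, so you risk circularity when you then use it to count orbits. (3) Most importantly, the bijectivity of the limit maps $C_{\pm}$ (distinct orbits flow to distinct critical $\Kf_0$-orbits, every critical orbit lies in exactly one $\Gf_0$- and one $\Kf$-orbit) and the identity $M_+\cap M_-=M_0$ are precisely the content of the theorem, and you explicitly defer them as ``the main obstacle.'' The paper's Lemma~\ref{lemduality} is exactly the elementary substitute for this step; your proposal contains no replacement for it. As it stands the proposal identifies the correct strategy but does not prove the statement.
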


The proof of Theorem~\ref{numfinito} is done by considering the
elements of $\Mf_{\pm}$ in the Grassmannian of $\dim(\qt)$-subspaces
of $\gt.$ On each orbit we can pick a $\qt'$ containining 
a $\thetaup$-stable Cartan subalgebra of $\gt.$ 
The fact that $\Mf_{\pm}$ is finite is then a consequence
of the fact that there are only finitely many congjugacy classes
of Cartan subalgebras with respect to the action of either
$\Gf_0$ or $\Kf$ (see e.g. \cite[Prop. 6.64]{Kn:2002}).
The last part of the statement is a consequence of the following
\begin {lem}\label{lemduality}
Let $\bt$ and $\bt'$ be Borel subalgebras containing $\theta$-stable Cartan subalgebras
of $\gt_0$.  If $\mathfrak{b}$ and $ \bt'$ are  either 
 $\Gf_0$-  or  $\Kf$-conjugate, 
 then they are  $\Kf_0$-conjugate.
\end{lem}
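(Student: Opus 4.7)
The plan is to reduce, in both cases, to the classical fact that two $\theta$-stable Cartan subalgebras of $\gt_0$ are $\Kf_0$-conjugate whenever they are $\Gf_0$-conjugate; this is part of the standard Knapp-style analysis of Cartan subalgebras of real reductive Lie algebras, cf.~\cite{Kn:2002}.

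Let $\hg\subset\bt$ and $\hg'\subset\bt'$ be the $\theta$-stable Cartan subalgebras of $\gt_0$ provided by the hypothesis. First I would perform a preliminary normalisation: if $g\in\Gf_0$ (respectively $g\in\Kf$) satisfies $\Ad(g)\bt'=\bt$, then $\Ad(g)\hg'$ and $\hg$ are both Cartan subalgebras of the Borel $\bt$, hence conjugate by some element $n$ of its unipotent radical. Setting $h:=ng$, the element $h\in\Gf$ then carries the pair $(\bt',\hg')$ to $(\bt,\hg)$, so in particular $\hg$ and $\hg'$ are $\Gf$-conjugate $\theta$-stable Cartans of $\gt_0$.

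The next step is to lift this $\Gf$-conjugacy to a $\Kf_0$-conjugacy of the Cartans. In the $\Gf_0$-case, the cocycle $h^{-1}\sigmaup(h)$ lies in $\N_\Gf(\hg')$, and its class in the complex Weyl group $\Wg(\hg',\gt)$ must be realised inside $\N_{\Kf_0}(\hg'\cap\gt_0)$; this reduces to the classical identification of the $\Gf_0$-Weyl group with the $\Kf_0$-Weyl group of a $\theta$-stable Cartan of $\gt_0$. Multiplying $h$ by a suitable element of $\N_\Gf(\hg')$ then produces an element of $\Kf_0$ that carries $\hg'$ to $\hg$. The $\Kf$-case is handled symmetrically, with $\tauup$ playing the role of $\sigmaup$ and the decomposition $\Kf=\Kf_0\exp(i\kt_0)$ replacing the real form decomposition of $\Gf$.

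After these reductions one may assume, up to a $\Kf_0$-conjugation on $\bt'$, that $\hg=\hg'$. It then remains to prove that two Borel subalgebras of $\gt$ sharing the same $\theta$-stable Cartan $\hg$ and related by an element of $\Gf_0$ or $\Kf$ are already related by an element of $\Kf_0$. Each such ambient conjugation yields a well-defined element of $\Wg(\hg,\gt)$, and the crux of the argument—where I expect the main obstacle to lie—is the verification that the \emph{real} Weyl group $\N_{\Gf_0}(\hg\cap\gt_0)/\Z_{\Gf_0}(\hg\cap\gt_0)$ and the $\Kf$-Weyl group $\N_\Kf(\hg)/\Z_\Kf(\hg)$ both coincide with the compact Weyl group $\N_{\Kf_0}(\hg\cap\gt_0)/\Z_{\Kf_0}(\hg\cap\gt_0)$. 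This is a standard consequence of the structure theory of real reductive pairs and concludes the reduction.
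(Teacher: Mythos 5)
Your route is genuinely different from the paper's, and as written it has a gap at its center. The paper never touches Weyl groups: it observes that $\Ad(g_0)(\hg'_0)$ and $\hg_0$ are two Cartan subalgebras of the \emph{real solvable} algebra $\bt\cap\gt_0$, hence conjugate by $\exp(X_0)$ with $X_0$ in the nilradical of $\bt\cap\gt_0$, so the corrected element $g_1=\exp(-X_0)g_0$ stays in $\Gf_0$ and carries $(\bt',\hg'_0)$ to $(\bt,\hg_0)$; it then writes $g_1=\exp(Y_0)k_0$ by the global Cartan decomposition and uses $\theta$-stability of both Cartan subalgebras to force $\exp(2Y_0)$ into the normalizer of a $\theta$-stable Cartan subalgebra, which kills $Y_0$ and shows that $k_0\in\Kf_0$ already does the job. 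Your preliminary normalisation instead conjugates the two Cartans by an element $n$ of the \emph{complex} unipotent radical of $\bt$, so $h=ng$ lies only in $\Gf$. This discards exactly the information the lemma is about: $\theta$-stable Cartan subalgebras of $\gt_0$ that are $\Gf$-conjugate need not be $\Gf_0$-conjugate (the compact and split Cartan subalgebras of $\slt_2(\R)$ are $\SL_2(\C)$-conjugate), so the conclusion ``$\hg$ and $\hg'$ are $\Gf$-conjugate'' is strictly too weak to feed into the next step.

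The cocycle argument meant to repair this is where the real gap sits. Knowing that $u=h^{-1}\sigmaup(h)\in\N_{\Gf}(\hg')$, and even that its image in $\Wg(\hg',\gt)$ is realised in $\N_{\Kf_0}(\hg'_0)$, does not let you ``multiply $h$ by a suitable element of $\N_{\Gf}(\hg')$'' to land in $\Gf_0$: for that you must solve $u=v\,\sigmaup(v)^{-1}$ with $v\in\N_{\Gf}(\hg')$, i.e.\ show that the class of $u$ in the nonabelian cohomology set $H^1(\sigmaup,\N_{\Gf}(\hg'))$ is trivial. That is not a formal consequence of the identification $\N_{\Gf_0}(\hg_0)/\Z_{\Gf_0}(\hg_0)=\N_{\Kf_0}(\hg_0)/\Z_{\Kf_0}(\hg_0)$ (Knapp's Proposition 7.25, which only concerns elements already in $\Gf_0$), and no argument is offered. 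Similarly, the final ``crux'' --- that $\N_{\Kf}(\hg)/\Z_{\Kf}(\hg)$ also coincides with the compact Weyl group --- is true but is essentially the content of Matsuki duality for the full flag manifold; calling it ``a standard consequence of the structure theory'' leaves the actual difficulty of the lemma unproved. If you want to keep a Weyl-group framework, you need at least (i) to perform the preliminary conjugation inside $\bt\cap\gt_0$ (so as to remain in $\Gf_0$, as the paper does), and (ii) a proof or precise reference for the coincidence of the three Weyl groups of a $\theta$-stable Cartan subalgebra; the paper's Cartan-decomposition trick bypasses both issues.
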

\begin{proof}
Let $\bt=\hg\oplus\nt$ and $\bt'=\hg'\oplus\nt'$, where $\hg$ and $\hg'$ are the complexifications 
of $\theta$-stable Cartan subalgebras $\hg_0$ and $\hg'_0$ of $\gt_0$ and 
$\nt,\,\nt'$ the nilradicals
of $\bt,\,\bt'$, respectively.  
\par 
Assume that $\bt=\Ad(g_0)(\bt')$, for some $g_0\in\Gf_0$. 
The proof 
in the case where $\bt=\Ad(k)(\bt')$ for some $k\in\Kf$ is similar, and will be omitted. 
\par 
Both
 $\Ad(g_0)(\hg'_0)$ and $\hg_0$ are Cartan subalgebras of $\bt\cap\gt_0,$ 
which is a solvable Lie subalgebra of $\gt_0$. Hence 
 $\Ad(g_0)(\hg_0')=\Ad(\exp(X_0))(\hg_0)$ for some $X_0$ in the nilradical
 of $\bt\cap\gt_0$. For $g_1=\exp(-X_0)\cdot{g}_0$, 
we have $\Ad(g_1)(\hg'_0)=\hg_0$ and $\Ad(g_1)(\bt')=\bt$. \par
 To show that $g_1\in\Kf_0,$ 
 we use the Cartan decomposition
 $\Gf_0=\exp(\pt_0)\cdot\Kf_0,$ 
where $\pt_0=\{Y\in\gt_0\mid \thetaup(Y)=-Y\},$  
 to write $g_1=\exp(Y_0)\cdot{k}_0$, with 
 $Y_0\in\pt_0$ and $k_0\in\Kf_0$. From 
\begin{equation*}
 \Ad(g_1)(\hg_0')=\hg_0=\theta(\hg_0)=\theta(\Ad(g_1)(\theta(\hg'_0))=\Ad(\theta(g_1))(\hg'_0)
\end{equation*}
 we obtain that 
\begin{equation*}
 \Ad(\exp(Y_0))(\Ad(k_0)(\hg'_0))=\Ad(\exp(-Y_0))(\Ad(k_0)(\hg'_0)),
\end{equation*}
i.e. $y_0=\Ad(\exp(2Y_0))$ normalizes the $\theta$-stable
 Cartan subalgebra $\Ad(k_0)(\hg'_0)$. This implies that $y_0\in\Kf_0$ and thus
 that $Y_0=0$,
 yielding $g_1=k_0\in\Kf_0$.
\end{proof}
From  Lemma~\ref{lemduality} we immediately obtain the statement on
the Matsuki duality in the case where $\Qf$ is a Borel subgroup
$\Bf.$ 
The general case follows by considering the natural fibration
$\Gf/\Bf\to\Gf/\Qf$ for a Borel $\Bf\subset\Qf.$ \qed
\par
\bigskip

\subsection{CR Manifolds}
Let $M_0$ be a smooth manifold of real dimensione $m$, countable at infinity.
A \textit{formally
integrable partial complex structure} of type $(n,k)$ 
(with $m=2n+k$) on $M_0$ 
is a pair 
$(HM_0,J),$ 
consisting of a rank $2n$  vector subbundle $HM_0$ of  
its 
tangent 
bundle 
$TM_0$ and a smooth 
fiber-preserving 
vector bundle isomorphism 
$
J:HM_0\rightarrow HM_0 
$
with $J^2=-I$, satisfying the integrability conditions 
\begin{equation*}\begin{cases}
[X,Y]-[JX,JY]\in\Gamma(M_0,HM_0),\\ [X,JY]+[JX,Y]=J([X,Y]-[JX,JY])
, 
\end{cases}\quad 
\forall X,Y\in\Gamma (M_0,HM_0).
\end{equation*}
This is equivalent to the fact that the subbundles 
$
T^{1,0}M_0=\{x-iJX\mid X\in HM_0\}
,$  
$T^{0,1}M_0=\{x+iJX\mid X\in HM_0\}$
of the complexification $\C\otimes{H}M_0$ 
of the structure bundle $HM_0$ are formally \textit{Frobenius integrable},
i.e. that 
\begin{align*}
&  [\Gamma(M_0,T^{1,0}M_0),
\Gamma(M_0,T^{1,0}M_0)]\subset\Gamma(M_0,T^{1,0}M_0),\quad
\intertext{or, equivalently, that} 
& [\Gamma(M_0,T^{0,1}M_0),\Gamma(M_0,T^{0,1}M_0)]
\subset\Gamma(M_0,T^{0,1}M_0).\end{align*}
These complex subbundles are the eigenspaces of $J$ corresponding to
the eigenvalues $\pm{i}$ and 
 $T^{1,0}M_0=\overline{T^{0,1}M_0}$,
 $T^{1,0}M_0\cap T^{0,1}M_0=\{0\}$,
$T^{1,0}M_0\oplus T^{0,1}M_0=\C\otimes HM_0$.

An (abstract) \textit{CR manifold} $M_0$ of type $(n,k)$ 
is a smooth paracompact real manifold
$M_0$ on which a
formally integrable partial complex structure $(HM_0,J),$
of type $(n,k),$ 
has been fixed. The integers $n$ and $k$ are
its $CR$ dimension and codimension, respectively.
Complex manifolds have $k=0,$ while for $n=0$ we say that
$M_0$ is \textit{totally real.}
\par
A smooth map $f:M_0\to{N}_0$ between $CR$ manifolds
is $CR$ iff its differential
${df}$ maps $HM_0$ to $HN_0$ and commutes with
the partial complex structures.\par 
If $f:M_0\to{N}_0$ is a $CR$ map and a smooth immersion (resp. 
embedding)
such that
$df^{-1}(HN_0)={H}M_0,$ then we say that $f$ is a $CR$ immersion
(resp. embedding).
Let $M_0$ be of type $(n_{M_0},k_{M_0})$ and $N_0$ of type
$(n_{N_0},k_{N_0}).$
A CR immersion (or embedding) $f:M_0\to N_0$ is \textit{generic}
if $n_{M_0}+k_{M_0}=n_{N_0}+k_{N_0}.$ 
\par 
The  
\textit{characteristic bundle}
$H^{0}M_0$ of a $CR$ manifold $M_0,$ of type $(n,k),$ is 
 the annihilator bundle 
 of its structure bundle $HM_0.$ It is a rank $k$ 
 linear subbundle of the real cotangent bundle $T^*M_0$. 
Its elements parametrize the \textit{scalar Levi forms} of $M_0$:
if $\xiup\in{H}^0_{p_0}M_0$ and $X\in{H}_{p_0}M_0$, then 
\begin{equation*}
\Levi_{\xiup}(X)=d\tilde{\xiup}(X,JX)=\xiup([J\tilde{X},\tilde{X}]),
\end{equation*} 
where $\tilde{\xiup}\in\Gamma(M_0.H^0M_0)$ extends $\xiup$ and
$\tilde{X}\in\Gamma(M_0,HM_0)$ extends $X$, is a quadratic form
on $H_{p_0}M$, which is Hermitian with respect to the complex structure
defined by $J.$ This $\Levi_{\xiup}$ is the \textit{scalar Levi form
at the characteristic $\xiup.$}\par 
If $\tilde{Z}=\tilde{X}+iJ\tilde{X}\in\Gamma(M_0,T^{0,1}M_0),$ 
then $\Levi_{\xiup}(X)=\tfrac{1}{2}\xiup(i[\tilde{Z}^*,
{\tilde{Z}}])$ and therefore we can as well consider the scalar
Levi forms as defined on $T^{0,1}_{p_0}M_0.$\par 
Let $\lambdaup(\alpha)=(\lambdaup^{+}(\xiup),\lambdaup^{-}(\xiup))$ 
be the signature of the Hermitian form $L_{\xiup}.$
Then $\nuup_{\xiup}=\min\{\lambdaup^{+}(\xiup),\lambdaup^{-}(\xiup)\}$
is its \textit{Witt index}.\par 
We say that $M_0$ is \emph{strongly $q$-pseudoconcave at a point $p_0$}
if the Witt index of $L_{\xiup}$ is 
greater or equal to ${q}$ for all nonzero
$\xiup\in{H}^0_{p_0}M_0.$\par 
For the relevant definitions of the tangential Cauchy-Riemann complex
and the relationship of its groups with $q$-pseudoconcavity 
we refer e.g. to \cite{AjHe1,AjHe2,14,BHN15,HN1995,40,78,23}.

\subsection{Homogeneous CR  Manifolds}
Let $\Gf_{0}$ be a real  Lie group with Lie algebra $\gt_0$, and denote by
 $\gt=\C\otimes\gt_0$  its complexification.
A $\Gf_{0}$-\textit{homogeneous \emph{CR} manifold}\index{CR!homogeneous manifold}  is a $\Gf_{0}$-homogeneous 
\textit{smooth} manifold endowed with a $\Gf_{0}$-invariant CR structure.
\par 
Let $M_0$ be a $\Gf_0$-homogeneous CR manifold. Fix a point 
$p_{0}\in{M}_0$ 
and denote by $\Ef_0$ 
its 
stabilizer 
in 
$\Gf_0$.
The natural projection
\begin{equation*}
 \pi:\Gf_0\longrightarrow \Gf/\Ef_0\simeq{M}_0,
\end{equation*}
makes $\Gf_0$ the total space of a principal $\Ef_0$-bundle with base ${M}_0$.
Denote by $\Zf(\Gf_0)$ 
the space of smooth sections of the pullback 
to $\Gf_{0}$ 
of  
$T^{0,1}M.$ 
It is the set of complex valued vector fields 
$Z$ on $\Gf_{0}$ such that 
$d\pi^{\C}(Z_{g})\in T_{\pi\left(g\right)}^{0,1}{M}_0,$ 
for all $g\in\Gf_{0}.$
\par 
Since $T^{0,1}M$ is formally integrable,
 $\Zf\left(\Gf_{0}\right)$
is formally integrable, i.e. 
$$
 [\Zf(\Gf_0),\Zf(\Gf_0)]\subset \Zf(\Gf_0).
$$
Being invariant by left translations, $\Zf(\Gf_0)$ 
is generated, as a  left $C^{\infty}(\Gf_0)$-module, 
by its left invariant vector fields. Hence 
\begin{equation}\label{CRalg}
\eg=(d\pi^{\C})^{-1}
(T_{x_{0}}^{0,1}M)\subset\mathfrak{g}=T_{e}^{\C}\Gf_{0}
\end{equation}
is an $\Ad(\Ef_0)$-invariant complex Lie subalgebra of $\mathfrak{g}$. 
We have:
\begin{lem}\label{lem2.1.1}
Denote by $\eg_0$ the Lie algebra of the 
isotropy subgroup $\Ef_0$.
 Then \eqref{CRalg} establishes a one-to-one correspondence 
 between the $\Gf_{0}$-homogeneus CR structures on ${M}_0=\Gf_{0}/\Ef_0$ 
 and the $\Ad_{\gt}(\Ef_0)$-invariant complex Lie subalgebras 
 $\eg$ of $\mathfrak{g}$ 
 such that $\eg\cap\gt_{0}=\eg_0.$
 \qed
\end{lem}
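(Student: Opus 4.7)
The plan is to verify that the correspondence \eqref{CRalg} is bijective by constructing an explicit inverse and checking that the conditions listed match. I will split the argument into a forward direction, a reverse direction, and a mutual-inverse check.

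For the forward direction, assume a $\Gf_0$-invariant CR structure on $M_0 = \Gf_0/\Ef_0$ is given and define $\eg$ by \eqref{CRalg}. The fact that $\eg$ is a complex Lie subalgebra of $\gt$ has already been established in the discussion preceding the lemma (formal integrability of $T^{0,1}M_0$ together with left-invariance). That $\eg$ is $\Ad_\gt(\Ef_0)$-invariant follows because for $h\in\Ef_0$ the differential $d\pi^{\C}$ intertwines $\Ad(h)$ on $\gt$ with the isotropy action of $h$ on $T^{\C}_{p_0}M_0$, and the latter preserves $T^{0,1}_{p_0}M_0$ by $\Gf_0$-invariance. The inclusion $\eg_0\subset\eg\cap\gt_0$ is immediate since $\eg_0$ lies in the kernel of $d\pi^{\C}$. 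For the reverse inclusion, a real vector $X\in\gt_0$ with $X\in\eg$ projects to a real vector $d\pi(X)\in T^{0,1}_{p_0}M_0\cap T_{p_0}M_0$; since $T^{1,0}_{p_0}M_0=\overline{T^{0,1}_{p_0}M_0}$ and the two intersect trivially, a real vector lying in $T^{0,1}_{p_0}M_0$ must vanish, so $X\in\eg_0$.

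For the reverse direction, start from an $\Ad_\gt(\Ef_0)$-invariant complex Lie subalgebra $\eg\subset\gt$ with $\eg\cap\gt_0=\eg_0$. First observe that $\eg_0\subset\eg$ and $\eg_0\subset\bar{\eg}$ (the latter because $\bar{\eg}$ contains $\overline{\eg_0}=\eg_0$), hence $\eg_0^{\C}\subset\eg\cap\bar{\eg}$. The intersection $\eg\cap\bar{\eg}$ is stable under the conjugation $\sigma$ of $\gt$ with respect to $\gt_0$, so it equals the complexification of its real part, and that real part is contained in $\eg\cap\gt_0=\eg_0$; therefore $\eg\cap\bar{\eg}=\eg_0^{\C}$. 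Setting $T^{0,1}_{p_0}M_0:=d\pi^{\C}(\eg)\subset T^{\C}_{p_0}M_0$, this shows $T^{0,1}_{p_0}M_0\cap\overline{T^{0,1}_{p_0}M_0}=\{0\}$. Then $H_{p_0}M_0$ is defined as the real part of $T^{0,1}_{p_0}M_0\oplus\overline{T^{0,1}_{p_0}M_0}$, and $J$ as multiplication by $-i$ on the $T^{0,1}$ piece; this gives a partial complex structure at the base point.

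Next I propagate this structure to the whole of $M_0$ by left translation. Well-definedness of $T^{0,1}_{g\cdot p_0}M_0:=dL_g(T^{0,1}_{p_0}M_0)$ is precisely the requirement that $T^{0,1}_{p_0}M_0$ be stable under the isotropy action of $\Ef_0$, which is equivalent to $\Ad_\gt(\Ef_0)$-invariance of $\eg$. Formal integrability of the resulting $T^{0,1}M_0$ is checked on left-invariant sections, where the bracket closure reduces to $[\eg,\eg]\subset\eg$, exactly as in the generation argument used to introduce \eqref{CRalg}. Finally, the mutual-inverse check is routine: starting from a CR structure, forming $\eg$ via \eqref{CRalg}, and then reconstructing $T^{0,1}_{p_0}M_0$ as $d\pi^{\C}(\eg)$ returns the original subspace, while starting from $\eg$ and running both constructions returns the same subalgebra thanks to $\ker d\pi^{\C}=\eg_0^{\C}\subset\eg$.

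The main obstacle I anticipate is the careful bookkeeping in the reverse direction: one must use the hypothesis $\eg\cap\gt_0=\eg_0$ in the complexified form $\eg\cap\bar{\eg}=\eg_0^{\C}$ in order to guarantee the splitting $T^{1,0}\cap T^{0,1}=0$ at $p_0$, and one must check that $\Ad(\Ef_0)$-invariance is not only necessary (as seen in the forward direction) but is the exact condition needed to propagate the structure by left translations to a well-defined $\Gf_0$-invariant subbundle of $\C\otimes TM_0$. Everything else is bookkeeping around the identifications $T^{\C}_{p_0}M_0\cong\gt/\eg_0^{\C}$.
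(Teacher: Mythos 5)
Your argument is correct and is exactly the argument the paper leaves implicit (the lemma is stated with the proof omitted, the forward direction being contained in the discussion of $\Zf(\Gf_0)$ that precedes it): the key points — $\eg\cap\bar{\eg}=\eg_0^{\C}$ forcing $T^{1,0}_{p_0}M_0\cap T^{0,1}_{p_0}M_0=\{0\}$, $\Ad_{\gt}(\Ef_0)$-invariance being equivalent to well-definedness under left translation, and integrability reducing to $[\eg,\eg]\subset\eg$ on left-invariant generators — are all in place. No changes needed.
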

The pair $(\gt_0,\eg)$ completely determines the homogeneous $CR$
structure of $M_0$ and is called \emph{the $CR$-algebra}
of $M_0$ at $p_0$ 
(see \cite{MN05}).

\par\smallskip
Let $M_0$,
$N_0$ be $\Gf_0$-homogeneous CR manifolds and $\phiup:M_0\to{N}_0$ 
a $\Gf_0$-equi\-variant smooth map. 
Fix $p_0\in{M}_0$ al let 
$(\gt_0,\eg)$ and $(\gt_0,\mathfrak{f})$ be the CR algebras associated to $M_0$ at 
$p_0$ and to $N_0$ at $\phiup(p_0)$, respectively.  Then
$\eg\cap\overline{\eg}\subset\fg\cap\overline{\fg},$
and $\phiup$ is $CR$ if and only if 
$ \eg\subset\mathfrak{f}$
and 
is \textit{a CR-submersion} if and only if 
$\fg=\eg+\fg\cap\overline{\fg}.$
\par 
  The fibers of a $\Gf_0$-equivariant
 $CR$ submersion are homogeneous $CR$ manifolds: if $\Fb_0$
 is the stabilizer of $\phiup(p_0)\in N_0$, with Lie algebra $\fg_0=\fg\cap\gt_0$,
 then  
 $\Phi_0=\phi^{-1}(\phi(p_0))\simeq\Fb_0/\Ef_0$ has 
 $(\fg_0,\eg\cap
 \overline{\fg})$ as associated $CR$ algebra at $p_0$.
\begin{cor}
A CR-subbmersion $\phiup:M_0\to N_0$ has :
\begin{enumerate}
\item totally real fibers if and only if $\eg\cap\bar{\fg}=\bar\eg\cap\fg=\eg\cap\bar\eg$ ;
\item complex fibers if and only if $\eg\cap\bar{\fg}+\bar{\eg}
\cap\fg=\fg\cap\bar{\fg}$.\qed
\end{enumerate} 
\end{cor}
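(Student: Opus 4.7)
The plan is to apply, to the fiber CR-algebra $(\fg_0,\eg\cap\bar{\fg})$ identified just above the corollary, the standard criteria characterizing totally real and complex homogeneous $CR$ manifolds at the level of $CR$-algebras.

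The preliminary observation I would need is that, for any $CR$-algebra $(\mathfrak{l}_0,\mathfrak{m})$ arising from \eqref{CRalg}, the intersection $\mathfrak{m}\cap\bar{\mathfrak{m}}$ coincides with $\ker d\pi^{\C}$, hence with the complexification of the isotropy $\mathfrak{m}\cap\mathfrak{l}_0$; this follows from $T^{0,1}\cap T^{1,0}=\{0\}$. In particular the underlying homogeneous $CR$ manifold is \emph{totally real} iff $\mathfrak{m}=\mathfrak{m}\cap\bar{\mathfrak{m}}$, and \emph{complex} iff $\mathfrak{m}+\bar{\mathfrak{m}}=\C\otimes\mathfrak{l}_0$. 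Applying this to $M_0$ and $N_0$ themselves yields the two identities
\begin{equation*}
\eg\cap\bar\eg=\C\otimes\eg_0,\qquad \fg\cap\bar\fg=\C\otimes\fg_0,
\end{equation*}
which I will use freely below.

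Now I turn to the fiber. Using $\eg\subset\fg$, and hence $\eg\cap\bar\eg\subset\fg\cap\bar{\fg}$, one has
\begin{equation*}
(\eg\cap\bar{\fg})\cap\overline{(\eg\cap\bar{\fg})}=\eg\cap\bar\eg\cap\fg\cap\bar{\fg}=\eg\cap\bar\eg.
\end{equation*}
For (1), the totally-real criterion applied to the fiber therefore reads $\eg\cap\bar{\fg}=\eg\cap\bar\eg$; complex-conjugating this equality gives $\bar\eg\cap\fg=\bar\eg\cap\eg$ automatically, so the stated chain of equalities is equivalent to either single equality. For (2), the complex criterion applied to the fiber reads $(\eg\cap\bar{\fg})+\overline{(\eg\cap\bar{\fg})}=\C\otimes\fg_0$; rewriting $\overline{\eg\cap\bar{\fg}}=\bar\eg\cap\fg$ and $\C\otimes\fg_0=\fg\cap\bar{\fg}$ gives exactly the stated identity.

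No step presents a real obstacle: the argument is essentially bookkeeping with intersections and conjugates inside $\gt$. The one point that genuinely uses content is the identification $\mathfrak{m}\cap\bar{\mathfrak{m}}=\C\otimes(\mathfrak{m}\cap\mathfrak{l}_0)$, because it is what converts the purely algebraic conditions into the geometric notions ``totally real'' and ``complex'' for the fiber $\Phi_0$. Once that observation is in hand, each of (1) and (2) is a direct substitution.
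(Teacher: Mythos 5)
Your proof is correct and follows exactly the route the paper intends: the corollary is left as immediate from the preceding identification of the fiber's $CR$ algebra as $(\fg_0,\eg\cap\bar{\fg})$, and you supply precisely the missing bookkeeping, namely the identity $\mathfrak{m}\cap\bar{\mathfrak{m}}=\C\otimes(\mathfrak{m}\cap\mathfrak{l}_0)$ and the translation of ``totally real'' and ``complex'' into $\mathfrak{m}=\mathfrak{m}\cap\bar{\mathfrak{m}}$ and $\mathfrak{m}+\bar{\mathfrak{m}}=\C\otimes\mathfrak{l}_0$. Nothing is missing.
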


\subsection{\texorpdfstring{$\nt$}{TEXT}-reductive compact
\texorpdfstring{$CR$}{TEXT} manifolds}\label{nrid}
Let $\kt$ be a reductive complex Lie algebra 
and \par\centerline{
$
\kt=\mathfrak{z}\oplus\mathfrak{s}
$}\par\noindent
its decomposition 
into the sum of its center $\mathfrak{z}=\{X\in\kt\mid [X,Y]=0,\;\forall Y\in\kt\}$ 
and its semisimple ideal
$\mathfrak{s}=[\kt,\kt].$ We fix a 
 a faithful linear representation of $\kt$ in which 
the elements of $\zt$ correspond 
to semisimple matrices. We call semisimple and nilpotent the elements
of $\kt$  which are associated to semisimple and  
nilpotent matrices, respectively.
Each $X\in\kt$ admits a  unique Jordan-Chevalley decomposition
\begin{equation}
X=X_s+X_n,\;\;\text{with $X_s
,\; 
X_n
\in\kt$ 
and $X_s$ 
semisimple, $X_n
$ nilpotent.}
\end{equation}
\par
A real or complex  Lie subalgebra $\vt$ of $\kt$ is called \textit{splittable} if, for each $X\in\vt$, both $X_s$ and $X_n$ belong to $\vt$.
\par
Let $\vt$ be a Lie subalgebra of $\kt$ and $\mathrm{rad}(\vt)$ its radical (i.e. its maximal solvable ideal). 
Denote by 
\begin{equation}\label{vn}
\vt_n=\{ X\in\mathrm{rad}(\vt)
\mid 
\ad(X)\text{ is nilpotent}\}
\end{equation}
its nilradical (see \cite[p.58]{GOV94}).
It is the maximal  nilpotent ideal of $\vt$. 
We have (see \cite[Ch.VII,\S{5},Prop.7]{Bou75}):
\begin{prop}
Every splittable Lie subalgebra $\vt$ 
is a direct sum 
\begin{equation}
\vt=\vt_r+\vt_n,
\end{equation}
of its nilradical $\vt_n$ and of a reductive subalgebra $\vt_r$,  
which is uniquely determined modulo conjugation by elementary automorphisms of $\vt$.\qed
\end{prop}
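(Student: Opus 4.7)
The plan is to build $\vt_r$ by combining a Levi factor of $\vt$ with a well-chosen maximal toral subalgebra of the radical, then invoke conjugacy theorems for uniqueness.

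First, by the Levi--Malcev theorem, write $\vt=\Levi\oplus\rad(\vt)$ as a vector space direct sum, with $\Levi$ a semisimple subalgebra. Because $\vt$ is splittable, one checks that $\rad(\vt)$ is splittable as well: the characterization \eqref{vn} shows $\vt_n\subset\rad(\vt)$, and the Jordan--Chevalley components of any $X\in\rad(\vt)\subset\vt$ lie in $\vt$ and in fact commute with the solvable ideal they generate, hence remain in $\rad(\vt)$.

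Next, analyze the splittable solvable algebra $\rad(\vt)$. For such an algebra, the set of $\ad$-nilpotent elements is exactly $\vt_n$, and the quotient $\rad(\vt)/\vt_n$ is abelian and consists of semisimple classes. Lifting a basis of this quotient to semisimple elements in $\rad(\vt)$ (using splittability to guarantee such lifts exist) produces an abelian subalgebra $\tt\subset\rad(\vt)$ of semisimple elements with $\rad(\vt)=\tt\oplus\vt_n$ as vector spaces. Any two such $\tt$ are conjugate under $\exp(\ad(\vt_n))$, by the standard conjugacy of maximal tori in splittable solvable algebras.

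The crucial step is to arrange that $[\Levi,\tt]=0$, so that $\vt_r:=\Levi\oplus\tt$ becomes a direct sum of a semisimple ideal and a central torus, hence reductive. For this, consider the $\Levi$-module structure on $\rad(\vt)$: by Weyl complete reducibility, the $\ad(\Levi)$-invariants in $\rad(\vt)$ map isomorphically onto $\rad(\vt)/[\Levi,\rad(\vt)]$, and one shows $[\Levi,\rad(\vt)]\subset\vt_n$ (since $\Levi=[\Levi,\Levi]$ acts by traceless derivations, sending semisimple classes to nilpotent ones). Hence we may choose $\tt$ inside the $\Levi$-centralizer, giving $[\Levi,\tt]=0$. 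Then $\vt_r=\Levi\oplus\tt$ is reductive and $\vt=\vt_r+\vt_n$.

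For uniqueness, suppose $\vt=\vt_r'+\vt_n$ is another such decomposition. The image of $\vt_r'$ in $\vt/\rad(\vt)$ is a Levi factor, so by Malcev's theorem there is an elementary automorphism $\exp(\ad(X))$, $X\in\vt_n$, carrying the semisimple part of $\vt_r'$ onto $\Levi$; after applying it we may assume both $\vt_r,\vt_r'$ contain $\Levi$ and differ only in their toral parts inside the centralizer of $\Levi$ in $\rad(\vt)$. Conjugacy of maximal tori of this splittable solvable centralizer, again by an elementary automorphism coming from its nilradical, finishes the argument. The main obstacle is the middle step: controlling the $\Levi$-action on $\rad(\vt)$ precisely enough to place $\tt$ in the centralizer, which is what makes the sum $\Levi+\tt$ a bona fide reductive subalgebra rather than a mere vector space complement.
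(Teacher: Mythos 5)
Your argument is, in outline, the standard one --- essentially the proof of Bourbaki, Ch.~VII, \S 5, Prop.~7, which is precisely what the paper cites in place of giving a proof of its own (the proposition appears there with a bare reference and a qed, and no internal argument to compare against). Your reduction to the splittable radical, the observation that $[\Levi,\rad(\vt)]\subseteq\vt_n$ so that a complementary torus can be sought inside the centralizer of a Levi factor $\Levi$, and the two-stage uniqueness argument (Malcev conjugacy of Levi factors by $\exp(\ad X)$ with $X\in[\vt,\rad(\vt)]\subseteq\vt_n$, followed by conjugacy of maximal tori of the splittable centralizer) are all correct and are the right ingredients.

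One step fails as written, however: lifting a basis of $\rad(\vt)/\vt_n$ to semisimple elements does \emph{not} by itself produce an abelian subalgebra. Semisimple lifts of commuting classes need not commute. For instance, in $\gl_2(\C)$ the matrices $x=\diag(1,0)$ and $y=E_{12}+E_{22}$ are both semisimple and differ from the commuting pair $\diag(1,0)$, $\diag(0,1)$ only by the nilpotent element $E_{12}$, yet $[x,y]=E_{12}\neq 0$; the lift $y$ must be corrected to $y-E_{12}=\diag(0,1)$ before it commutes with $x$. Choosing such corrections coherently for an entire basis, so that the span is a commutative algebra of semisimple elements supplementing $\vt_n$, is exactly the (inductive) content of the existence theorem for maximal tori of splittable solvable Lie algebras; it is a genuine theorem, not a consequence of surjectivity onto the abelian quotient. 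Since you already appeal to the conjugacy of maximal tori two sentences later, the repair is simply to invoke the existence statement from the same body of theory (Bourbaki, Ch.~VII, \S 5) in place of the lifting heuristic; with that substitution the rest of your proof goes through.
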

\par\smallskip
We assume in the following that $\kt$ is the complexification of a compact Lie algebra $\kt_0$. 
Since compact Lie algebras are reductive, and the complexification of a reductive real Lie algebra is
reductive, $\kt$ is complex reductive. Conjugation in $\kt$ will be taken with respect to the
real form $\kt_0$.

\begin{prop}
For any complex Lie subalgebra $\vt$ of $\kt$, the intersection $\vt\cap\bar\vt$ is reductive and splittable.
 In particular  $\vt\cap\bar\vt\cap\vt_n=\{0\}$. A splittable $\vt$ admits a Levi-Chevalley decomposition with a reductive Levi factor containing $\vt\cap\bar\vt.$ \qed
\end{prop}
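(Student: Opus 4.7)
The key observation to start with is that $\vt\cap\bar\vt$ is preserved by the conjugation $\tauup$ of $\kt$ with respect to $\kt_0$, hence equals the complexification of the real Lie subalgebra $\mathfrak{u}:=\vt\cap\bar\vt\cap\kt_0$ of $\kt_0$. Restricting any $\ad$-invariant positive-definite form on $\kt_0$ (characterization~(3) of the opening Proposition) to $\mathfrak{u}$ shows that $\mathfrak{u}$ is itself compact, hence reductive by~(4); and since the complexification of a reductive real Lie algebra is reductive, $\vt\cap\bar\vt=\mathfrak{u}_{\C}$ is a reductive complex Lie algebra.

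To obtain splittability I would decompose $\mathfrak{u}=\mathfrak{z}(\mathfrak{u})\oplus[\mathfrak{u},\mathfrak{u}]$ and complexify. Choosing on the representation space a Hermitian inner product for which $\kt_0$ acts by skew-Hermitian operators (possible because the compact group underlying $\kt_0$ has a faithful unitary representation), every element of $\kt_0$ is semisimple as a matrix; commuting skew-Hermitian elements are simultaneously diagonalizable, so each element of $\mathfrak{z}(\mathfrak{u})_{\C}$ already coincides with its own semisimple part. On the complex semisimple summand $[\mathfrak{u},\mathfrak{u}]_{\C}$ the abstract Jordan--Chevalley decomposition is intrinsic and is realized on the nose in any faithful representation. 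Combining these, for $X=Z+S$ with $Z\in\mathfrak{z}(\mathfrak{u})_{\C}$ and $S=S_s+S_n\in[\mathfrak{u},\mathfrak{u}]_{\C}$, centrality of $Z$ yields the Jordan decomposition $X_s=Z+S_s$, $X_n=S_n$ with both summands in $\vt\cap\bar\vt$.

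For the disjointness $\vt\cap\bar\vt\cap\vt_n=\{0\}$ I would observe that $\mathrm{rad}(\vt)\cap(\vt\cap\bar\vt)$ is a solvable ideal of the reductive algebra $\vt\cap\bar\vt$, hence lies in its centre $\mathfrak{z}(\mathfrak{u})_{\C}$; by the previous paragraph every element of that centre is semisimple, while every element of $\vt_n$ is nilpotent, so the intersection consists of elements simultaneously semisimple and nilpotent, i.e.\ zero.

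Finally, for the Levi--Chevalley statement I would start from any splitting $\vt=\vt'_r\oplus\vt_n$ furnished by the preceding Proposition; the disjointness just established makes the natural projection $\vt\to\vt/\vt_n\cong\vt'_r$ injective on the reductive subalgebra $\vt\cap\bar\vt$, so its image is a reductive subalgebra of $\vt'_r$. Mostow's theorem on conjugacy of reductive subalgebras of a splittable Lie algebra then produces an elementary automorphism of $\vt$ (a product of exponentials $\exp(\ad X)$ with $X\in\vt_n$) carrying this image back onto $\vt\cap\bar\vt$; the corresponding conjugate $\vt_r$ of $\vt'_r$ is then a Levi factor containing $\vt\cap\bar\vt$. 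The main non-formal step is this final appeal to Mostow--Malcev conjugacy; everything else follows from the compact-form structure of $\kt$ together with the description of splittable subalgebras cited in the immediately preceding Proposition.
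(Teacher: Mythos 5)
The paper states this proposition without proof (it carries only a terminal QED and is, like the surrounding statements, imported from the theory of \cite{AMN2013}), so there is no internal argument to measure yours against; judged on its own merits, your proof is correct and follows what is surely the intended route: $\vt\cap\bar{\vt}$ is stable under conjugation, hence is the complexification of the subalgebra $\mathfrak{u}=\vt\cap\bar{\vt}\cap\kt_0$ of the compact algebra $\kt_0$, which is itself compact and therefore reductive; splittability and the semisimplicity of the elements of the centre give $\vt\cap\bar{\vt}\cap\vt_n=\{0\}$ once one observes, as you do, that $\mathrm{rad}(\vt)\cap\vt\cap\bar{\vt}$ is a solvable ideal of a reductive algebra and hence central; and the final assertion is Mostow--Malcev conjugacy of fully reducible subalgebras. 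Two small points deserve tightening. First, the faithful representation fixed in \S\ref{nrid} is only required to make the elements of the centre $\zt$ of $\kt$ semisimple, so one cannot in general choose a Hermitian product for which all of $\kt_0$ acts skew-Hermitianly (central elements may have real eigenvalues); but the conclusion you actually need --- that every element of $\kt_0$, hence of $\mathfrak{z}(\mathfrak{u})_{\C}$, is a semisimple matrix --- still follows by writing $X=Z+S$ with $Z\in\zt$ and $S\in[\kt,\kt]$, using that the abstract Jordan decomposition on the semisimple part is realized in every representation and that commuting semisimple operators are simultaneously diagonalizable. Second, Mostow's theorem applies to subalgebras that are \emph{reductive in} $\kt$ (equivalently, act semisimply on the representation space), not to subalgebras that are merely reductive as abstract Lie algebras --- a line spanned by a nilpotent element is abelian, hence abstractly reductive, yet lies in no reductive Levi factor; you should therefore record that $\vt\cap\bar{\vt}$ is fully reducible, which follows from the semisimplicity of its central elements established in your first two paragraphs. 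With these two remarks made explicit, your argument is complete.
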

Let $M_0$ be a $\Kf_0$-homogeneous $CR$ manifold,
$p_0\in{M}_0$  and $(\kt_0,\vt)$ its $CR$ algebra at 
$p_0.$ 
\begin{dfn}\label{vriduttiva}
 We say that $
M_0
$ 
and its CR-algebra $(\kt_0,\vt)$ are $\nt$-\textit{reductive}\index{CR!reductive algebra} if 
\begin{equation}
\vt=(\vt\cap\overline{\vt})\oplus\vt_n,
\end{equation}
i.e. if $\vt_r=\vt\cap\bar\vt$ is a reductive 
Levi factor in $\vt$.
\end{dfn} 
If $(\kt_0,\vt)$ is $\nt$-reductive, then $\vt$ is splittable. Indeed the elements of $\vt_n$ are nilpotent and 
those of $\vt\cap\kt_0$ semisimple.
Having a set of generators that are either nilpotent or semisimple, $\vt$
is splittable (see \cite[
Ch.VII,\S{5},Thm.1]{Bou75}). 
\par 
Let us consider the situation of  \S\ref{matsdual} and keep the notation therein.
The submanifolds $M_0(p)$ are examples of 
compact $\Kf_0$-homogeneous $CR$ submanifold of 
the complex flag manifolds $M.$ In \cite[\S{6}]{AMN2013} it was shown that
\begin{prop} The inclusion $M_0(p)\hookrightarrow{M}_-(p)$ is a generic $CR$ embedding.
A necessary and sufficient condition for $M_0(p)$ to be $\nt$-reductive is that 
\begin{equation*}
M_0(p)=M_+(p)\cap{M}_-(p). 
\end{equation*}
\end{prop}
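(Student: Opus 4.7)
\medskip
\noindent\textbf{Plan.} For the first part I would read off the CR-algebra of $M_0(p)$ at $p$: it is $(\kt_0,\vt)$ with $\vt=\kt\cap\qt$, where $\qt$ is the isotropy parabolic at $p$. Since $\tau$ fixes $\kt_0$ pointwise, $\kt_0\cap\qt=\kt_0\cap\bar\qt$, and therefore $\vt\cap\bar\vt=(\kt_0\cap\qt)\otimes_{\R}\C$. Writing $n$ for the CR-dimension and $k$ for the CR-codimension of $M_0(p)$, I read off $n=\dim_{\C}\vt-\dim_{\C}(\vt\cap\bar\vt)$ and, from $\dim_{\R}M_0(p)=2n+k=\dim_{\C}\kt-\dim_{\C}(\vt\cap\bar\vt)$,
\[
n+k=\dim_{\C}\kt-\dim_{\C}\vt=\dim_{\C}(\kt/\vt)=\dim_{\C}M_-(p),
\]
which is precisely the condition for the inclusion into the complex manifold $M_-(p)$ to be a generic CR-embedding.

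\medskip
The inclusion $M_0(p)\subseteq M_+(p)\cap M_-(p)$ is automatic. The plan for the equivalence is to introduce an intermediate condition: \emph{the parabolic $\qt$ contains a $\theta$-stable Cartan subalgebra of $\gt$} (equivalently, $\qt$ admits a $\theta$-stable Levi factor $\lt$). For the equivalence with $\nt$-reductivity, one direction is immediate: if $\qt=\lt\oplus\nt_{\qt}$ with $\theta(\lt)=\lt$, then intersecting with $\kt=\gt^{\theta}$ yields $\vt=(\kt\cap\lt)\oplus(\kt\cap\nt_{\qt})$, a Levi decomposition whose reductive factor $\kt\cap\lt$ lies in $\vt\cap\bar\vt$, and a dimension count forces equality. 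Conversely, starting from $\nt$-reductivity I would pick a $\tau$-stable Cartan of $\vt_r=\vt\cap\bar\vt$ (which sits inside $\vt_0\subset\kt_0$) and extend it through a compatible Cartan of the centralizer in $\pt\cap(\qt\cap\bar\qt)$ to a $\theta$-stable Cartan of $\gt$ still contained in $\qt$; the Levi of $\qt$ containing it is then automatically $\theta$-stable. For the equivalence with $M_0(p)=M_+(p)\cap M_-(p)$ I would invoke Matsuki duality: once $\qt$ contains a $\theta$-stable Cartan, the argument of Lemma~\ref{lemduality}, applied to an arbitrary $g_0\in\Gf_0$ with $g_0\in\Kf\Qf$, produces a $\Kf_0$-adjustment absorbing $g_0$ into $\Ef$, giving $\Gf_0\cap\Kf\Qf=\Kf_0\Ef$ and hence $M_+(p)\cap M_-(p)=M_0(p)$; conversely, if $M_0(p)=M_+(p)\cap M_-(p)$, then $(M_+(p),M_-(p))$ is forced to be a Matsuki dual pair by Theorem~\ref{numfinito}, and Matsuki's construction then yields a $\theta$-stable Cartan inside the common stabilizer, which sits in $\qt$.

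\medskip
\noindent\textbf{Main obstacle.} The hardest step is the Cartan-extension in the $\nt$-reductive direction: verifying that a $\tau$-stable Cartan of the reductive factor $\vt\cap\bar\vt$ genuinely extends to a $\theta$-stable Cartan of $\gt$ that remains inside $\qt$. This rests on the fact that $\qt\cap\bar\qt$ is a $\tau$-stable reductive subalgebra of $\gt$ of full rank whose compact real form contains $\vt_0$, so that a judicious choice of extending torus in $\pt\cap(\qt\cap\bar\qt)$ produces the required $\theta$-stable Cartan; once this algebraic fact is in place, the reduction to Matsuki duality via Lemma~\ref{lemduality} and Theorem~\ref{numfinito} proceeds smoothly.
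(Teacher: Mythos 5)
The paper does not actually prove this Proposition: it is quoted from \cite{AMN2013}, \S{6}, with no argument given in the text, so there is no internal proof to measure you against and I assess your plan on its own terms. Your first part is correct and complete as a sketch: the $CR$ algebra of $M_0(p)$ at $p$ is $(\kt_0,\kt\cap\qt_p)$ for the isotropy parabolic $\qt_p$, the intersection $\vt\cap\bar\vt$ is the complexification of the real isotropy subalgebra $\kt_0\cap\qt_p$, and your identity $n+k=\dim_\C\kt-\dim_\C\vt=\dim_\C M_-(p)$ is exactly the genericity condition. Likewise, in the second part the implications from your intermediate condition (a $\sigma$- and $\theta$-stable Cartan subalgebra inside $\qt_p$) to $M_0(p)=M_+(p)\cap M_-(p)$ and to $\nt$-reductivity are in line with how the paper sets up Matsuki duality (Lemma~\ref{lemduality}).

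The genuine gap is the step you yourself flag as the main obstacle: the passage from $\nt$-reductivity of $(\kt_0,\vt)$ back to a $\theta$-stable Cartan subalgebra contained in $\qt_p$. This is not merely the hardest step; as stated it fails. Take $\Gf=\SL_2(\C)$, $\Gf_0=\SU(1,1)$, $M=\CP^1$, so that $\Kf$ is the diagonal torus and $\Kf_0$ the diagonal circle, and let $p=[1:2]$. Then $\vt=\kt\cap\qt_p=\{0\}$ (the diagonal matrix $\diag(1,-1)$ does not preserve the line through $(1,2)$), and $\vt=(\vt\cap\bar\vt)\oplus\vt_n$ holds vacuously, so $M_0(p)$ (a circle, totally real) is $\nt$-reductive in the sense of Definition 1.2. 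Yet $M_+(p)$ is the open disc $\{|z|<1\}$, $M_-(p)\simeq\C^*$, and $M_+(p)\cap M_-(p)$ is the punctured disc, strictly larger than $M_0(p)$; correspondingly the Borel $\qt_p$ contains no $\sigma$- and $\theta$-stable Cartan subalgebra, since those have their fixed points only at $0$, $\infty$ and on the unit circle. So there is no $\theta$-stable Cartan to extend to, and no choice of extending torus in $\pt\cap(\qt_p\cap\bar\qt_p)$ can produce one: the bare algebraic condition $\vt=(\vt\cap\bar\vt)\oplus\vt_n$ does not imply your intermediate condition. Consequently the ``necessity'' half of the equivalence cannot be obtained along your route (and the example indicates that the statement, as compressed in this survey, needs the precise formulation and hypotheses of \cite{AMN2013}, \S{6} before that direction can be closed). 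The half you can salvage is: dual intersection $\Rightarrow$ $\theta$-stable Cartan in $\qt_p$ $\Rightarrow$ $\nt$-reductive, together with the genericity statement.
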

\section{Mostow fibration and applications to cohomology} \label{most}
We use the notation of \S\ref{nrid}. Let $\Vf$ be the analytic subgroup of $\Kf$ with Lie algebra $\vt$
and $\Vf_0$ the isotropy subgroup at $p_0\in{M}_0$, having Lie algebra $\vt_0=\vt\cap\kt_0.$
\begin{prop} [{\cite[Theorem 26]{AMN2013}}]
If $M_0$ is $\nt$-reductive, then $\Vf$ is an algebraic subgroup of $\Kf$ and the natural map
\begin{equation*}
 M_0=\Kf_0/\Vf_0 \longrightarrow M_-=\Kf/\Vf
\end{equation*}
is a generic $CR$ embedding. \qed
\end{prop}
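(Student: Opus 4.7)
The plan is to prove two assertions: algebraicity of $\Vf$, and that the natural map $M_0 = \Kf_0/\Vf_0 \to \Kf/\Vf = M_-$ is a generic $CR$ embedding. The key input is the Levi decomposition $\vt = (\vt\cap\bar\vt)\oplus\vt_n$ provided by $\nt$-reductivity, combined with the observation that $\vt\cap\bar\vt$, being a complex conjugation-invariant subspace of $\kt$, coincides with the complexification of $\vt_0 = \vt\cap\kt_0$.

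For algebraicity, I would realize $\Vf$ as an algebraic semidirect product. The analytic subgroup of $\Kf$ with Lie algebra $\vt\cap\bar\vt$ is the identity component of the complexification of the compact group $\Vf_0$, hence a closed complex algebraic subgroup of $\Kf$. The analytic subgroup $\exp(\vt_n)$ is unipotent and algebraic. Since $\vt_n$ is an ideal of $\vt$, it is normalized by the reductive factor, so that $\Vf^{\circ}$ is obtained as their semidirect product, hence algebraic; adjoining the finite component group of $\Vf_0$ yields $\Vf$.

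For the embedding, I would first verify $\Kf_0\cap\Vf = \Vf_0$, which gives injectivity of $M_0\to M_-$. The Lie algebras match, so identity components coincide. For the component groups, I would invoke the structure theory of algebraic subgroups of $\Kf$: every compact subgroup of $\Vf$ lies in a maximal compact, and since $\exp(\vt_n)$ contains no nontrivial compact subgroup while a maximal compact of the reductive factor equals $\Vf_0$, the component count of $\Kf_0\cap\Vf$ is bounded by that of $\Vf_0$; combined with the obvious inclusion $\Vf_0\subseteq\Kf_0\cap\Vf$ this forces equality. Since $\Kf_0$ is compact, the injective immersion $\Kf_0/\Vf_0\to\Kf/\Vf$ is then a proper embedding.

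Finally, for the $CR$-property and genericity, I would trace the differential. The holomorphic $\Kf$-action on $M_-$ gives a $\C$-linear map $\kt\to T_{p_0}M_-$ with kernel $\vt$. Writing $X = X_1 + iX_2 \in \vt$ with $X_j\in\kt_0$, vanishing of this map translates into $d\pi_-^{\C}(X)$ lying in the $-i$-eigenspace $T^{0,1}_{p_0}M_-$ of $J^{\C}$. Since the complexified orbit map $d\pi_-^{\C}$ factors through $\kt\to T^{\C}_{p_0}M_0$ with kernel $\vt\cap\bar\vt$, the image of the intrinsic $T^{0,1}_{p_0}M_0 = \vt/(\vt\cap\bar\vt)$ sits inside $T^{0,1}_{p_0}M_-$, and a dimension check against the induced CR structure identifies them. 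Genericity then reduces to the count $n_{M_0}+k_{M_0} = \dim_{\C}\vt_n + (\dim_{\C}\kt - \dim_{\R}\vt_0 - 2\dim_{\C}\vt_n) = \dim_{\C}\kt - \dim_{\C}\vt = n_{M_-}$, using $\dim_{\C}\vt = \dim_{\R}\vt_0 + \dim_{\C}\vt_n$. The main obstacle is the component-level identification $\Kf_0\cap\Vf = \Vf_0$, where infinitesimal agreement is insufficient and one must appeal to global algebraic-group structure; the remaining ingredients are either routine or follow from the Levi decomposition.
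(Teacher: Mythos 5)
The paper itself offers no argument here: the proposition is quoted from \cite[Theorem 26]{AMN2013} and closed immediately, so there is no in-text proof to compare yours against. Your reconstruction is essentially the standard one and I find it correct. The pivotal identification is $\vt\cap\bar{\vt}=\C\otimes\vt_0$ (a conjugation-stable complex subspace of $\kt$ is the complexification of its trace on $\kt_0$), which makes the Levi factor the complexification of the compact isotropy, hence a closed reductive algebraic subgroup; combined with the unipotent algebraic group $\exp(\vt_n)$, which it normalizes, this yields algebraicity of $\Vf$ --- and this is exactly where $\nt$-reductivity is indispensable, since for a general complex subalgebra $\vt$ the analytic subgroup need not even be closed in $\Kf$. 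Your argument for $\Kf_0\cap\Vf=\Vf_0$ is sound: $\Kf_0\cap\Vf$ is compact because $\Vf$ is closed, it contains $\Vf_0$, and it lies in a maximal compact subgroup of $\Vf=\Vf_r\ltimes\exp(\vt_n)$, which is a conjugate of $\Vf_0$; equality of dimensions and of component counts then forces equality, and compactness of $\Kf_0$ upgrades the injective immersion to an embedding. The $CR$ and genericity part is also right: the $(1,0)$-component of the complexified orbit map $\kt\to\C\otimes T_{p_0}M_-$ has kernel exactly $\vt$, so the pull-back of $T^{0,1}_{p_0}M_-$ along the $\Kf_0$-orbit map is $\vt$, which is by definition the $CR$ algebra of $M_0$; the count $\dim_{\C}\vt=\dim_{\R}\vt_0+\dim_{\C}\vt_n$ (again the direct sum of $\nt$-reductivity) gives $n_{M_0}+k_{M_0}=\dim_{\C}M_-$. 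The one point to keep explicit is that the paper defines $\Vf$ as the \emph{analytic} (connected) subgroup while $\Vf_0$ may be disconnected; your remark about adjoining the component group of $\Vf_0$ is precisely the adjustment needed for $\Kf_0/\Vf_0\to\Kf/\Vf$ to be well defined, and it matches how the statement must be read.
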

When, as in \S\ref{matsdual}, $M_0$ is the intersection of two Matsuki dual orbits in a complex flag manifold,
 $M_-$ has a compactification $\tilde{M}_-$ which is a 
complex projective variety. Thus, in principle, we can study its Dolbeault cohomology by 
algebraic geometric techniques. On the other hand, by 
Mostow's decomposition (see \cite{Most56,Most62})
we know that $M_-$ is a $\Kf_0$-equivariant fiber bundle over $M_0,$
whose fibers are totally real Euclidean subspaces. 
We can exploit this fact for constructing an exhaustion function on $M_-$ whose level sets
can be used to relate the tangential $CR$ cohomology of $M_0$ to the Dolbeaut cohomology
of $M_-.$
\subsection{Mostow fibration of \texorpdfstring{$M_-$}{TEXT} (I)}
The isotropy $\Vf_0$ is a maximal compact subgroup of $\Vf$ and, putting together the
Levi-Chevalley decomposition of $\Vf$ and the Cartan decomposition of $\Vf_r$,  
we have a diffeomorphism
\begin{equation}
 \Vf_0\times\vt_0\times\vt_n\ni (x,Y,Z)\longrightarrow x\cdot\exp(iY)\cdot\exp(Z)\in\Vf.
\end{equation}
Then by \cite[Theorem A]{Most62} \textsl{we can find a closed Euclidean subspace $F$ of $\Kf$ 
such that $\ad(y)(F)=F,$ for all $y\in\Vf_0$, and 
\begin{equation}
 \Kf_0\times{F}\times\vt_0\times\vt_n\ni (x,f,Y,Z)\longrightarrow x\cdot{f}\cdot\exp(iY)\cdot\exp(Z)
 \in\Kf
\end{equation}
is a diffeomorphism.} \par 
Let $\kil$ be an $\Ad(\Kf_0)$-invariant scalar product on $\kt_0$ and set 
\begin{equation*}
 \mt_0=((\vt+\overline{\vt})\cap\kt_0)^\perp,\;\; \mt=\C\otimes\mt_0.
\end{equation*}
%
Since\footnote{In fact, if $\bar{Z}\in\bar{\vt}_n$, then $\bar{Z}=-Z+(Z+\bar{Z})$, with $Z=\Bar{\bar{Z}}\in\vt_n$
and $Z+\bar{Z}\in\kt_0$. The sum is direct because $\vt_n\cap\kt_0=\{0\}$.}
$
 \vt_n\oplus\bar{\vt}_n=\vt_n\oplus  ([\vt_n\oplus\bar{\vt}_n]\cap\kt_0),
$ we obtain the decomposition
\begin{equation*}
 \kt=\kt_0 \oplus\,{i}\,\mt_0\oplus (i\,\vt_0\oplus\vt_n).
\end{equation*}
which suggests that 
$\exp(i\mt_0)$ could be a reasonable candidate for the fiber $F$ of the Mostow fibration.
Let us consider the smooth map
\begin{equation}\label{eq3.5a}
 \Kf_0\times\mt_0\times \,\vt_0\times{v}_n\ni (x,T,Y,Z)\xrightarrow{\;\;\;\;\;} x\cdot\exp(iT)\cdot
 \exp(iY)\cdot \exp(Z)\in\Kf.
\end{equation}
\begin{prop}\label{prop2.2}
 The map \eqref{eq3.5a} is onto and we can find $r>0$ such that its restriction to
 $\{\kil(T,T)<r^2\}$ 
 is a diffeomorpism with the image. \qed
\end{prop}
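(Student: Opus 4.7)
The plan is to establish global surjectivity and injectivity of the map $F(x,T,Y,Z)=x\exp(iT)\exp(iY)\exp(Z)$, then to show invertibility of $dF$ uniformly in $(Y,Z)$ on a ball $\{\kil(T,T)<r^2\}$; an immersion that is globally injective is a diffeomorphism onto its image, so the three ingredients combine to give the conclusion.

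Surjectivity follows from two applications of Mostow's fibration theorem \cite{Most62}. The first, applied to $(\Kf,\Vf)$ with maximal compacts $(\Kf_0,\Vf_0)$, gives the $\Kf_0$-equivariant diffeomorphism $\Kf_0\times_{\Vf_0}\mt_0\xrightarrow{\sim}\Kf/\Vf$, $[x,T]\mapsto x\exp(iT)\Vf$, so $\Kf=\Kf_0\cdot\exp(i\mt_0)\cdot\Vf$. The second, applied to $\Vf$ itself (combining the $\nt$-reductive Levi decomposition $\Vf=\Vf_r\cdot\exp(\vt_n)$ with the Cartan decomposition $\Vf_r=\Vf_0\cdot\exp(i\vt_0)$), yields the diffeomorphism $\Vf_0\times\vt_0\times\vt_n\xrightarrow{\sim}\Vf$, $(w,Y,Z)\mapsto w\exp(iY)\exp(Z)$. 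Combining and using $\Ad(\Vf_0)(\mt_0)=\mt_0$ --- which holds since $\mt_0$ is the $\kil$-orthogonal of the $\Vf_0$-stable subspace $(\vt+\bar\vt)\cap\kt_0$ --- any $g\in\Kf$ takes the desired form. Global injectivity exploits the same two diffeomorphisms: projecting an equality $F(x_1,T_1,Y_1,Z_1)=F(x_2,T_2,Y_2,Z_2)$ to $\Kf/\Vf$ forces $(x_2,T_2)=(x_1w^{-1},\Ad(w)T_1)$ for some $w\in\Vf_0$, and back-substitution reduces to $w\exp(iY_1)\exp(Z_1)=\exp(iY_2)\exp(Z_2)$ in $\Vf$; the uniqueness in Mostow for $\Vf$ gives $w=e$ and $(Y_1,Z_1)=(Y_2,Z_2)$, whence the full equality. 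At $(e,0,0,0)$ the differential $(X,T,Y,Z)\mapsto X+iT+iY+Z$ is an isomorphism by the decomposition $\kt=\kt_0\oplus i\mt_0\oplus(i\vt_0\oplus\vt_n)$.

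The main obstacle is extending invertibility of the differential uniformly over the noncompact parameters $(Y,Z)$ for small $T$. After reducing to $x=e$ by $\Kf_0$-equivariance and setting $\phi(Y,Z)=\exp(iY)\exp(Z)$, left-trivializing at $F(e,T,Y,Z)$ and applying $\Ad(\exp(iT))$ expresses the image of $dF$ in $\kt$ as the sum of three contributions: $\kt_0$ from the $X$-direction, $\Ad(\phi(Y,Z))\bigl(d\phi_{(Y,Z)}(\vt_0\times\vt_n)\bigr)$ from the $(Y,Z)$-direction, and $\tfrac{e^{i\ad T}-1}{\ad T}(\mt_0)$ from the $T$-direction. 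Mostow applied to $\Vf$ at $(e,Y,Z)$ gives $\vt=\vt_0\oplus\Ad(\phi(Y,Z))\bigl(d\phi_{(Y,Z)}(\vt_0\times\vt_n)\bigr)$ for every $(Y,Z)$, so the first two contributions together always yield $\kt_0+\vt$ --- independent of $(Y,Z)$. The question then reduces to whether the $T$-contribution surjects onto $\kt/(\kt_0+\vt)\cong i\mt_0$; the induced linear map $\mt_0\to i\mt_0$ equals $T'\mapsto iT'$ at $T=0$, hence is an isomorphism whenever $\kil(T,T)<r^2$ for some $r>0$ by continuity, uniformly in $(Y,Z)$. On this ball $F$ is a globally injective immersion, hence a diffeomorphism onto its image.
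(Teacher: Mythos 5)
The paper states this proposition with the proof omitted (the \qed is attached to the statement, the argument being deferred to \cite{MaNa}), so there is no in-paper proof to compare with; but the paper's own later remarks already contradict your key premise. Both your surjectivity argument and your global injectivity argument rest on the assertion that Mostow's theorem, applied to $(\Kf,\Vf)$, yields the diffeomorphism $\Kf_0\times_{\Vf_0}\mt_0\ni[x,T]\mapsto x\exp(iT)\Vf\in\Kf/\Vf$, i.e.\ that the Mostow fiber may be taken to be $F=\exp(i\mt_0)$. Mostow's Theorem A only provides \emph{some} closed, $\Ad(\Vf_0)$-invariant Euclidean subspace $F\subset\Kf$ with $\Kf=\Kf_0\cdot F\cdot\Vf$; identifying $F$ with $\exp(i\mt_0)$ is precisely the delicate point, and \S 2.3 of the paper states explicitly that \eqref{eq3.5a} and \eqref{eq3.5b} are \emph{not} global diffeomorphisms in general --- that failure is the whole reason the {\HNR} condition is introduced. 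You are therefore assuming a statement that is strictly stronger than the proposition and false in general; with it in hand there would be nothing left to prove. Surjectivity of \eqref{eq3.5a} is true but requires a genuine argument (e.g.\ deforming Mostow's abstract fiber $F$ onto $\exp(i\mt_0)$, or minimizing a suitable function on each double coset $\Kf_0\,g\,\Vf$), and global injectivity should not be claimed at all: the proposition restricts to the tube $\{\kil(T,T)<r^2\}$ exactly because injectivity can fail for large $T$.

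The differential computation has a second gap. In the right trivialization the $(Y,Z)$-block of $dF$ is $\Ad(\exp(iT))$ applied to a complement $W(Y,Z)$ of $\vt_0$ in $\vt$; you drop this conjugation, and so the first two contributions give $\kt_0+\Ad(\exp(iT))(W(Y,Z))$, not $\kt_0+\vt$. More seriously, the concluding ``by continuity, uniformly in $(Y,Z)$'' is asserted precisely where compactness is unavailable: $(Y,Z)$ ranges over the noncompact space $\vt_0\times\vt_n$, the subspaces $W(Y,Z)$ form a noncompact family whose closure in the Grassmannian of subspaces of $\vt$ contains subspaces meeting $\vt_0$ nontrivially, and for such limits the sum $\kt_0+\tfrac{e^{i\ad T}-1}{\ad T}(\mt_0)+\Ad(\exp(iT))(W)$ can drop rank for arbitrarily small nonzero $T$. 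The sound part of your architecture is the fiberwise reduction, and it is also the way to avoid the uniformity problem: since $(w,Y,Z)\mapsto w\exp(iY)\exp(Z)$ is a global diffeomorphism onto $\Vf$ and $\Ad(\Vf_0)(\mt_0)=\mt_0$, the map \eqref{eq3.5a} is a diffeomorphism onto $\piup^{-1}(U)$ over any open set $U$ on which \eqref{eq3.5b} is a diffeomorphism onto its image, and is onto iff \eqref{eq3.5b} is. For \eqref{eq3.5b} the tube statement is then the standard tubular-neighborhood argument: the differential is invertible along the zero section by your computation at $T=0$, the zero section is the \emph{compact} manifold $M_0$, and it is the compactness of $M_0$ --- not any uniformity in $(Y,Z)$ --- that produces the radius $r$. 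What remains, and what your proposal does not establish, is the global surjectivity of \eqref{eq3.5b}.
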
 
Let  $\Kf_0\times_{\Vf_0}\mt_0$ be the quotient of $\Kf_0\times\mt_0$ by
$(x_1,T_1)\sim (x_2,T_2)$ iff $x_2=x_1\cdot{y}$ and $T_1=\Ad(y)(T_2)$  
and $\piup:\Kf \ni z \to z\cdot\Vf\in{M}_-$ the canonical projection. 
By passing to the quotients, \eqref{eq3.5a} yields a smooth map 
\begin{equation}\label{eq3.5b}
 \Kf_0\times_{\Vf_0}\mt_0 \ni [x,T] \longrightarrow \piup(x\cdot \exp(iT)) \in {M}_-,
\end{equation}
which is surjective and, when restricted to $\{\kil(T,T)<r^2\},$ defines a tubular neighborhood 
$U_r$ of
$M_0$ in $M_-.$ The function $\phiup([x,T])=\kil(T,T)/(r^2-\kil(T,T))$ is then an exhaustion 
function of the tubular neighborhood $U_r.$
\subsection{A local result} 
We can use Proposition~\ref{prop2.2} to precise, in this special case, the size of the
tubular neighborhoods of  
\cite[Theorem 2.1]{HN1995}:
\begin{prop}
 Assume that $M_0$ is $q$-pseudoconcave, of type $(n,k)$. Then we can find $r_0>0$  such that
 $U_r$ is is $q$-pseudoconcave and $(n-q)$-pseudoconvex and
 the natural restriction maps
\begin{equation*}
 H^{p,j}_{\bar{\partial}}(U_r)\longrightarrow H^{p,j}_{\bar{\partial}_{M_0}}(M_0) 
\end{equation*}
are isomorphisms of finite dimensional vector spaces 
for $0<r<r_0,$ for all \mbox{$0\leq{p}\leq{n+k}$}
and either $j<q$ or $j>n-q.$ \qed
\end{prop}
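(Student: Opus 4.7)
The plan is to combine the explicit tubular-neighbourhood description of $U_r$ given in Proposition~\ref{prop2.2} with the general cohomology isomorphism \cite[Thm~2.1]{HN1995}, the essentially new task being a \emph{quantitative} verification of its hypotheses: namely that $U_r$ is simultaneously $q$-pseudoconcave and $(n-q)$-pseudoconvex for $r$ below an explicit threshold $r_0$. Working in the Mostow chart $\Kf_0 \times_{\Vf_0} \mt_0$, I introduce the smooth function
\[
\rhoup([x,T]) = \kil(T,T),
\]
so that $U_r = \{\rhoup < r^2\}$ and $M_0$ is the zero locus of $\rhoup$. Since $\mt_0$ is totally real in the holomorphic tangent space of $M_-$ at a point of $M_0$ and transverse to the CR-tangent directions of $M_0$, a direct computation of the complex Hessian of $\rhoup$ on $T^{1,0}M_-$ at $T=0$ shows that it vanishes on $T^{1,0}M_0 \oplus T^{0,1}M_0$ and is positive-definite on the complex normal $k$-plane $\C \otimes \mt_0$.

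Next, on the CR-tangent space of the hypersurface $\partial U_r = \{\rhoup = r^2\}$, the Levi form decomposes, up to an $O(r)$ perturbation, as the orthogonal direct sum of the scalar Levi form $\Levi_\xiup$ of $M_0$ at the characteristic covector $\xiup \in H^0M_0$ dual to the outward radial direction in $\mt_0$, plus a positive-definite block on the remaining complex $(k{-}1)$-plane inside $\C \otimes \mt_0$. Invoking the $q$-pseudoconcavity hypothesis (Witt index $\geq q$ for every $\Levi_\xiup$) together with the compactness of $M_0$, this decomposition furnishes the signature conditions required for both $q$-pseudoconcavity and $(n-q)$-pseudoconvexity of $\partial U_r$, uniformly in $[x,T]$ for $r \in (0, r_0)$. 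Combined with the smooth proper exhaustion $\phiup([x,T]) = \kil(T,T)/(r^2 - \kil(T,T))$ of $U_r$ whose sub-level sets shrink to $M_0$, the assumptions of \cite[Thm~2.1]{HN1995} are then met, and they deliver both finite-dimensionality of the cohomology groups and the asserted restriction isomorphism in the ranges $j < q$ and $j > n-q$.

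The principal obstacle is the \emph{uniformity} of the Levi-form analysis: the signatures computed pointwise at $T = 0$ must remain stable as $T$ varies in the fibre $\mt_0$, because the Andreotti-Grauert-type arguments underlying \cite{HN1995} demand control on the whole annulus $\{0 < \rhoup < r^2\}$, not merely at the zero section. This perturbative statement relies on the diffeomorphism assertion of Proposition~\ref{prop2.2} together with the bounded geometry along the compact $\Kf_0$-orbit $M_0$; the two ingredients conspire to produce the explicit threshold $r_0$ below which the argument runs.
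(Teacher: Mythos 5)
Your proposal is correct and follows essentially the route the paper intends: the paper gives no written proof beyond citing \cite[Theorem 2.1]{HN1995} together with Proposition~\ref{prop2.2}, and your argument is precisely the natural fleshing-out of that citation (verifying the pseudoconcavity/pseudoconvexity hypotheses via the Levi form of $\partial U_r$ in the Mostow chart, with uniformity from compactness of $M_0$). Your signature analysis is moreover consistent with the computation the paper records later in the {\HNR} case, where the complex Hessian of the exhaustion has signature $(\lambdaup^+(\xiup)+k,\lambdaup^-(\xiup))$.
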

\subsection{Mostow fibration of \texorpdfstring{$M_-$}{TEXT} 
in the \texorpdfstring{\HNR}{TEXT} case (II)}
In  \cite{MaNa} the Authors show that \eqref{eq3.5a} and \eqref{eq3.5b} are not,
in general, global diffeomorphisms. To  single out a large class of
of compact homogeneous $\nt$-reductive $CR$ manifolds having a \textit{nice}
Mostow fibration, they
introduce the following notion\footnote{Actually they consider 
a slightly less restrictive condition, which is related to a notion of weak $CR$-degeneracy
for homogeneous $CR$ manifolds that was introduced in \cite{MN05}.}. 
\begin{defn}
 The $CR$ algebra $(\kt_0,\vt)$ is {\HNR} if $\vt=(\vt\cap\overline{\vt})\oplus\vt_n$ and\par
 \centerline{$\qt=\{Z\in\kt\mid [Z,\vt_n]\subset\vt_n\}$ \;\;\;  is parabolic.}
\end{defn}
Note that, when $(\kt_0,\vt)$ is $\nt$-reductive, it is always possible to find a parabolic
$\qt$ in $\kt$ with $\vt\subset\qt,$ $\qt=(\qt\cap\bar{\qt})\oplus\qt_n,$
and $\vt_n\subset\qt_n.$ Then $(\kt_0,\vt_r\oplus\qt_n)$ is {\HNR} and describes a
\textit{stronger} $\Kf_0$-homogeneous $CR$ structure on the same manifold $M_0.$
\begin{prop}
 If its $CR$ algebra $(\kt_0,\vt)$ is {\HNR}, then \eqref{eq3.5a} and \eqref{eq3.5b} 
 are diffeomorphisms.
\end{prop}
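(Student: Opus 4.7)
The plan is to globalise the local statement of Proposition~\ref{prop2.2} by exploiting the parabolic $\qt$ furnished by the \HNR{} hypothesis: $M_-$ then fibres over the compact flag manifold $X:=\Kf/\Qf$, and the full Mostow decomposition of $M_-$ may be assembled from the (essentially trivial) $\Kf_0$-transitive structure on the base together with a genuine fibrewise Mostow decomposition of the fibre, which is well behaved precisely because of \HNR.

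Let $\Qf\subset\Kf$ be the parabolic subgroup with Lie algebra $\qt$; it contains $\Vf$, and $\vt_n$ is an $L_0$-stable ideal of $\qt_n$, where $L_0:=\Kf_0\cap\Qf$. Since $\Qf$ is parabolic, $\Kf_0$ acts transitively on $X$, so $X=\Kf_0/L_0$, $\Kf=\Kf_0\cdot\Qf$, and there is a $\Kf$-equivariant holomorphic fibration $\varpi\colon M_-\to X$ with fibre $F:=\Qf/\Vf$. Using $\kil$ to split $\kt_0=\lt_0\oplus\nt_0$ orthogonally with $\lt_0=\kt_0\cap\qt\cap\bar\qt$ and $\nt_0=\kt_0\cap(\qt_n+\bar\qt_n)$, I write correspondingly $\mt_0=\mt_0^L\oplus\mt_0^N$.

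The crux of the argument is a fibrewise Mostow diffeomorphism
\[
 L_0\times_{\Vf_0}(\mt_0^L\oplus\mt_0^N)\longrightarrow F,\qquad [\ell,T_L+T_N]\longmapsto \ell\exp(iT_L)\exp(iT_N)\Vf.
\]
The $\mt_0^L$-direction is obtained from Mostow's original theorem applied to the reductive complex Levi $\Qf_L$ and its subgroup $\Vf_r\subset\Qf_L$ (with $\Vf_0$ maximal compact in $\Vf_r$). The $\mt_0^N$-direction relies on \HNR: since $\vt_n$ is an $L_0$-invariant ideal of $\qt_n$ and $\exp\colon\qt_n\to\Qf_n$ is a global diffeomorphism, the $\kil$-orthocomplement of $(\vt_n+\bar\vt_n)\cap\kt_0$ inside $\nt_0$ is precisely $\mt_0^N$, and $T_N\mapsto \exp(iT_N)\Vf_n$ is a global (not merely local) diffeomorphism onto a transversal to $\Vf_n$ in $\Qf_n$. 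Combining these via the Levi--Chevalley products $\Qf=\Qf_L\cdot\Qf_n$ and $\Vf=\Vf_r\cdot\Vf_n$ yields the fibrewise statement.

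Once this is in hand, the global statement follows: the principal $L_0$-bundle $\Kf_0\to X$ together with the $\Kf_0$-equivariance of $\varpi$ gives
\[
 \Kf_0\times_{\Vf_0}\mt_0 \;=\; \Kf_0\times_{L_0}\bigl(L_0\times_{\Vf_0}\mt_0\bigr) \;\xrightarrow{\;\cong\;}\; \Kf_0\times_{L_0}F \;=\; M_-,
\]
proving that \eqref{eq3.5b} is a diffeomorphism. The bijectivity of \eqref{eq3.5a} then follows by lifting through the free right $\Vf_0$-action on $\Kf_0$ and using that $(Y,Z)\mapsto\exp(iY)\exp(Z)$ is a diffeomorphism of $\vt_0\times\vt_n$ onto a smooth global section of $\Vf\to\Vf/\Vf_0$. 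The principal obstacle is the fibrewise step, specifically the global injectivity of $T_N\mapsto\exp(iT_N)\Vf$ on $\mt_0^N$: this is exactly the content of the \HNR{} hypothesis, which guarantees an $L_0$-equivariant complement of $\vt_n$ in $\qt_n$ on which the exponential remains injective modulo~$\Vf_n$.
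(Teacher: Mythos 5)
The paper itself does not prove this proposition (it is quoted from \cite{MaNa}), so your argument has to stand on its own. The general idea --- use the parabolic $\qt$ supplied by the {\HNR} hypothesis to reduce the global Mostow decomposition to the structure of $\Kf$ relative to $\Qf$ --- is in the right spirit, but the step you call the crux is broken, and the genuinely hard point is asserted rather than proved.

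Concretely: your ``fibrewise Mostow diffeomorphism'' does not land in the fibre. The space $\mt_0^N$ lies in $\kt_0\cap(\qt_n\oplus\bar{\qt}_n)$, so a nonzero $T_N\in\mt_0^N$ has the form $N+\bar{N}$ with $0\neq N\in\qt_n$; then $iT_N=iN+i\bar{N}$ has the nonzero component $i\bar{N}\in\bar{\qt}_n$, hence $iT_N\notin\qt$ and $\exp(iT_N)\notin\Qf$. Thus $\ell\exp(iT_L)\exp(iT_N)\Vf$ does not belong to $F=\Qf/\Vf$ (indeed the differential of $[\ell,T]\mapsto\varpi(\ell\exp(iT_L)\exp(iT_N)\Vf)$ in the $T_N$-direction is $i\bar{N}\bmod\qt\neq 0$, so the image is not contained in any single fibre), and the phrase ``$\exp(iT_N)\Vf_n$ is a transversal to $\Vf_n$ in $\Qf_n$'' is not meaningful, since $\exp(iT_N)$ is not an element of $\Qf_n$ or even of $\Qf$. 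The normal directions $i\mt_0$ to $M_0$ in $M_-$ are genuinely mixed with respect to the fibration $M_-\to\Kf/\Qf$, so the problem cannot be split into base times fibre as you propose. Finally, the claim that global injectivity ``is exactly the content of the {\HNR} hypothesis'' is circular: {\HNR} only asserts that the normalizer $\qt$ of $\vt_n$ is parabolic, and the global bijectivity of \eqref{eq3.5a} is the theorem to be deduced from that. What is actually required --- and what is nontrivial in \cite{MaNa} --- is to control how $\exp(i(N+\bar{N}))$, for $N$ ranging over a complement of $\vt_n$ in $\qt_n$, decomposes with respect to $\Kf_0$, $\exp(i(\kt_0\cap\qt\cap\bar{\qt}))$ and $\Qf_n$; surjectivity and the local statement near $T=0$ are already contained in Proposition~\ref{prop2.2}, and your argument supplies no proof of the missing global injectivity.
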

In this case $M_-$ admits \textit{a Mostow fibration with Hermitian fiber.} 
\subsection{Application to cohomology}
In the {\HNR} case we can use $\phiup([x,T])=\kil(T,T)$ as an exhaustion function on $M_-$.
Assume that $M_0$ has type $(n,k).$ 
It is shown in \cite{MaNa} that, for $T\in\mt_0$ and $\xiup=\kil(T,\,\cdot\,)\in{H}_{p_0}^0M_0,$ 
if the Levi form $\Levi_{\xiup}$ has signature $(\lambdaup^+(\xiup),\lambdaup^-(\xiup))$, then
the complex Hessian of $\phiup$ at the point $\piup(x\cdot\exp(iT))$ has signature
$(\lambdaup^+(\xiup)+k,\lambdaup^-(\xiup)).$ 
Then we get 
\begin{prop}
 Assume that $M_0$ is $q$-pseudoconcave and
 has a $CR$ algebra which is {\HNR}.
Then
 $M_-$ is $q$-pseudoconcave and $(n-q)$-pseudoconvex
  and the natural restriction maps
\begin{equation*}
 H^{p,j}_{\bar{\partial}}(U_r)\longrightarrow H^{p,j}_{\bar{\partial}_{M_0}}(M_0) 
\end{equation*}
are isomorphisms of finite dimensional vector spaces 
for $0<r<r_0,$ for all \mbox{$0\leq{p}\leq{n+k}$} 
and either $j<q$ or $j>n-q.$ 
\end{prop}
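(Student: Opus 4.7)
The plan is to treat $\phiup([x,T])=\kil(T,T)$ as a global exhaustion function on $M_-$ and read off the pseudoconcavity and pseudoconvexity from the signature of its complex Hessian as quoted from \cite{MaNa}; the cohomological conclusion then follows by combining an Andreotti-Grauert bumping argument with the tubular comparison of \cite{HN1995} already invoked in the preceding local result.

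First I would use the {\HNR} hypothesis to globalize the setup. By the preceding Proposition, \eqref{eq3.5a} and \eqref{eq3.5b} are global diffeomorphisms, so $M_-$ is realized as the total space of the Mostow fibration $\Kf_0\times_{\Vf_0}\mt_0\to M_-$ with Hermitian fiber $\mt_0$, and $\phiup$ is a smooth, $\Kf_0$-invariant, nonnegative, proper exhaustion of $M_-$ with $\{\phiup=0\}=M_0$. The sublevel sets $U_r=\{\phiup<r^2\}$ then form a fundamental system of $\Kf_0$-invariant tubular neighborhoods of $M_0$.

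Next, the cited signature computation gives, at every point $\piup(x\cdot\exp(iT))$ with $T\neq 0$, a complex Hessian of $\phiup$ of signature $(\lambdaup^+(\xiup)+k,\,\lambdaup^-(\xiup))$, where $\xiup=\kil(T,\cdot)\in H^0M_0$. The $q$-pseudoconcavity of $M_0$ yields $\min\{\lambdaup^+(\xiup),\lambdaup^-(\xiup)\}\geq q$ for every nonzero $\xiup$, so this Hessian has at least $q$ negative and at least $k+q$ positive eigenvalues at every point of $M_-\setminus M_0$. Together with the bound $\lambdaup^+(\xiup)+\lambdaup^-(\xiup)\leq n$, which forces $\lambdaup^-(\xiup)\leq n-q$, this is precisely the statement that $M_-$ is both $q$-pseudoconcave and $(n-q)$-pseudoconvex, with $\phiup$ as witness exhaustion.

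Finally, the cohomology part follows by Andreotti-Grauert type arguments applied to the exhaustion $\phiup$: the Hessian signatures above are exactly what is needed to run the bumping procedure between consecutive sublevel sets, which produces isomorphisms $H^{p,j}_{\bar{\partial}}(U_t)\to H^{p,j}_{\bar{\partial}}(U_s)$ of finite-dimensional spaces for $0<s<t<r_0$ whenever $j<q$ or $j>n-q$. Combining this stabilization with the tubular comparison between $H^{p,j}_{\bar{\partial}}(U_r)$ and $H^{p,j}_{\bar{\partial}_{M_0}}(M_0)$ from \cite{HN1995} recalled in the preceding local Proposition then yields the claimed isomorphisms. The main obstacle is precisely the globalization of that comparison: the local result of \cite{HN1995} produces an \emph{a priori} small $r_0$, and the whole point here is that the {\HNR} assumption, by making the Mostow fibration a global diffeomorphism with Hermitian fiber, ensures that the Hessian signature estimate holds uniformly on $M_-\setminus M_0$, so that the bumping argument can be carried through up to the radius imposed only by the geometry of the fiber rather than by an auxiliary smallness condition.
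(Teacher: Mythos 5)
Your proposal is correct and follows essentially the same route as the paper: the paper's proof consists precisely of invoking the Andreotti--Grauert theorems of \cite{AG62} together with the signature computation $(\lambdaup^+(\xiup)+k,\lambdaup^-(\xiup))$ for the complex Hessian of the exhaustion $\phiup([x,T])=\kil(T,T)$, which is exactly what you elaborate. Your additional remarks on how the {\HNR} hypothesis globalizes the Mostow fibration and on the comparison with the local result of \cite{HN1995} are consistent with the surrounding text and fill in details the paper leaves implicit.
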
 
\begin{proof}
 The statement follows from \cite{AG62} and the computation of the signature of the exhaustion function
 $\phiup.$
\end{proof}

\section{Example:
General orbits of \texorpdfstring{$\SU(p,q)$}{TEXT} in the Grassmannian}
Let us
consider the orbits of the real form 
$\Gf_0=\SU(p,q)$ of $\Gf=\SL_{p+q}(\C)$ in $\Gr_{\! m}(\C^{p+q})$. 
We assume that ${p}\leq{q}$. Let $\hf$ 
 be the Hermitian symmetric form of signature $(p,q)$
in $\C^{p+q}$ employed to define $\SU(p,q)$. \par 
The orbits of $\Gf_0$ 
are classified by the signature of the restriction of $\hf$ 
to their $m$-planes: to a pair of nonnegative integers $a,b$ with
\begin{equation}\tag{$*$}\label{starex}
a+b\leq{m},\;\; p_0=\max\{0,m-q\}\leq{a}\leq{p},\;\;
q_0=\max\{0,m-p\}\leq{b}\leq{q},\end{equation}
correspond the orbit $M_+(a,b)$ consisting of $m$-planes $\ell$  
for which  
 $\ker({\hf }|_{\ell})$ has signature $(a,b).$ 
\par 
To fix a
maximal compact subgroup of $\SU(p,q)$ we choose 
a couple of $\hf$-orthogonal subspaces $W_+\simeq\C^p,$ 
$W_-\simeq\C^q$ of $\C^{p+q},$ 
with $\hf>0$ on $W_+,$ $\hf<0$ on $W_-,$
$\C^{p+q}=W_+\oplus{W}_-.$ 
Then 
$\Kf_0\simeq\mathbf{S}(\Ub(p)\times\Ub(q))$,  and
$\Kf=\C\otimes\Kf_0\simeq\mathbf{S}(\GL_{p}(\C)\times\GL_q(\C)),$
with the first factor operating on $W_+$ and the second on
$W_-$. The orbits
of $\Kf$ in $\Gr_{\! m}(\C^{p+q})$ are characterized by 
the dimension of $\ell\cap{W}_+$ and $\ell\cap{W}_-.$ 
Let us set, for $a,b$ satisfying \eqref{starex}, 
\begin{align*}
 M_+(a,b)&
 =\{\ell\in\Gr_{\! m}(\C^{p+q})\mid \hf|_\ell \;\;\text{has signature $(a,b)$}\},\\
 M_-(a,b)&=\{\ell\in\Gr_m(\C^{p+q})\mid \dim_{\C}(\ell\cap{W_+})=a,\;\; \dim_{\C}(\ell\cap{W_-})=b\},\\
 M_0(a,b)&=M_+(a,b)\cap{M_-(a,b)}.
\end{align*}
To describe the $CR$ structure of $M_0(a,b)$ it suffices to compute
the Lie algebra of the stabilizer in $\Kf$ of any of its points.
Let $e_1,\hdots,e_p$ be an 
orthonormal basis of $W_+$ and $e_{p+1},\hdots,e_{p+q}$ an
orthonormal basis of $W_-.$ 
Set $c=m-a-b$ and $n_1=a,$ $n_2=c$, $n_3=p-a-c,$ $n_4=b,$ $n_5=c,$
$n_6=q-b-c.$
 Let us choose the base point 
$p_0=\langle e_1,\hdots,e_a, e_{a+1}+e_{p+b+1},\hdots, e_{a+c},
e_{p+a+c},
e_{p+1},\hdots,e_{p+b}\rangle\in{M}_0(a,b).$
Then \begin{equation*}
\vt=\left.\left\{ \begin{pmatrix}
Z_{1,1} & 0 & Z_{1,3}& 0 & 0 & 0\\
0 & Z_{2,2} & Z_{2,3}& 0 & 0 & 0 \\
0 & 0 & Z_{3,3}& 0 & 0 & 0\\
0 & 0 & 0 & Z_{4,4} & 0 & Z_{4,6}\\
0 & 0 & 0 & 0 & Z_{2,2} & Z_{5,6} \\
0 & 0 & 0 & 0 & 0 & Z_{6,6}
\end{pmatrix} \right| Z_{i,j}\in\C^{n_i\times{n}_j}
\right\}\cap\slt_{p+q}(\C).
\end{equation*}
We see that $(\kt_0,\vt)$ is {\HNR} and 
$M_0(a,b)$ 
has $CR$ dimension \par\centerline
{$n=n_1n_3+n_2n_3+n_4n_6+n_2n_6$.}\par 
 We have 
\begin{equation*}
\mt=\left.\left\{ \begin{pmatrix}
0 & W_{1,2} & 0 & 0 & 0 \\
W_{2,1}& W_{2,2} & 0 & 0 & 0 & 0 \\
0 & 0 & 0 & 0 & 0 & 0 \\
0 & 0 & 0 & 0 & W_{4,5} & 0 \\
0 & 0 & 0 & W_{5,4} & -W_{2,2} & 0 \\
0 & 0 & 0 & 0 & 0 & 0
\end{pmatrix}
\right| W_{i,j}\in\C^{n_i\times{n}_j}
\right\}\cap\slt_{p+q}(\C)
\end{equation*}
and hence the $CR$ codimension of $M_0(a,b)$ is \par\centerline
{$ k= n_1n_2+n_2n_4+n_2^2=n_2(n_1+n_2+n_4).$} \par 
We observe that for $c=0$ the $M_+(a,b)$ are the open orbits,
while the minimal orbit of $\Gf_0$ is $M_+(p_0,q_0).$
Then $M_0(a,b)$ is $\muup$-pseudoconcave with\par\centerline 
{$\muup=\min\{p-a-c,q-b-c\},$}\par\noindent
and  $M_-(a,b)$ is $\muup$-pseudoconcave and
$(n-\muup)$-pseudoconvex.

\providecommand{\bysame}{\leavevmode\hbox to3em{\hrulefill}\thinspace}
\providecommand{\MR}{\relax\ifhmode\unskip\space\fi MR }
\providecommand{\MRhref}[2]{%
  \href{http://www.ams.org/mathscinet-getitem?mr=#1}{#2}
}
\providecommand{\href}[2]{#2}


\begin{thebibliography}{10}

\bibitem{AjHe1}
R.~A. A{\u\i}rapetyan and G.~M. Khenkin, \emph{Integral representations of
  differential forms on {C}auchy-{R}iemann manifolds and the theory of
  {CR}-functions}, Uspekhi Mat. Nauk \textbf{39} (1984), no.~3(237), 39--106.
  \MR{747791}

\bibitem{AjHe2}
\bysame, \emph{Integral representations of differential forms on
  {C}auchy-{R}iemann manifolds and the theory of {CR}-functions. {II}}, Mat.
  Sb. (N.S.) \textbf{127(169)} (1985), no.~1, 92--112, 143. \MR{791319}

\bibitem{AMN2013}
A.~Altomani, C.~Medori, and M.~Nacinovich, \emph{Reductive compact homogeneous
  {CR} manifolds}, Transform. Groups \textbf{18} (2013), no.~2, 289--328.
  \MR{3055768}

\bibitem{14}
Aldo Andreotti, Gregory Fredricks, and Mauro Nacinovich, \emph{On the absence
  of {P}oincar\'e lemma in tangential {C}auchy-{R}iemann complexes}, Ann.
  Scuola Norm. Sup. Pisa Cl. Sci. (4) \textbf{8} (1981), no.~3, 365--404.
  \MR{634855 (83e:32021)}

\bibitem{AG62}
Aldo Andreotti and Hans Grauert, \emph{Th{\'e}or{\`e}mes de finitude pour la
  cohomologie des espaces complexes}, Bulletin de la Société Mathématique de
  France \textbf{90} (1962), 193--259.

\bibitem{BW58}
W.~M. Boothby and H.~C. Wang, \emph{On contact manifolds}, Ann. of Math. (2)
  \textbf{68} (1958), 721--734. \MR{0112160}

\bibitem{Bou75}
N.~Bourbaki, \emph{\'{E}l\'ements de math\'ematique}, Hermann, Paris, 1975,
  Fasc. {XXXVIII}: {G}roupes et alg{\`e}bres de {L}ie. {C}hapitre {VII}:
  {S}ous-alg\`ebres de {C}artan, {\'e}l{\'e}ments r{\'e}guliers. {C}hapitre
  {VIII}: {A}lg{\`e}bres de {L}ie semi-simples d{\'e}ploy{\'e}es,
  {A}ctualit{\'e}s {S}cientifiques et {I}ndustrielles, No. 1364. \MR{MR0453824
  (56 \#12077)}

\bibitem{Bou82}
\bysame, \emph{{\'E}lements de {M}athematique}, Masson, Paris, 1982, {F}asc.
  {XXXVIII}: {G}roupes et alg{\`e}bres de {L}ie. {C}hapitre 9: {G}roupes de
  {L}ie r{\'e}els compacts.

\bibitem{BHN15}
Judith Brinkschulte, C.~Denson Hill, and Mauro Nacinovich, \emph{On the
  nonvanishing of abstract {C}auchy-{R}iemann cohomology groups}, Math. Ann.
  \textbf{363} (2015), no.~1-2, 1--15. \MR{2592093 (2010j:32058)}

\bibitem{Cart1932a}
Élie Cartan, \emph{Sur la géométrie pseudo-conforme des hypersurfaces de
  l'espace de deux variables complexes i}, Ann. Mat. Pura Appl. \textbf{4}
  (1932), no.~4, 17--90 (fre).

\bibitem{Cart1932b}
\bysame, \emph{Sur la géométrie pseudo-conforme des hypersurfaces de l'espace
  de deux variables complexes ii}, Annali della Scuola Normale Superiore di
  Pisa - Classe di Scienze \textbf{1} (1932), no.~4, 333--354 (fre).

\bibitem{GOV94}
V.~V. Gorbatsevich, A.~L. Onishchik, and E.~B. Vinberg, \emph{Structure of lie
  groups and lie algebras}, Springer-Verlag, Berlin, 1994, Translated from the
  Russian by A. Kozlowski,. \MR{MR1631937 (99c:22009)}

\bibitem{HN1995}
C.~Denson Hill and Mauro Nacinovich, \emph{Aneurysms of pseudoconcavecr
  manifolds}, Mathematische Zeitschrift \textbf{220} (1995), no.~1, 347--367.

\bibitem{40}
\bysame, \emph{Pseudoconcave {CR} manifolds}, Complex analysis and geometry
  ({T}rento, 1993), Lecture Notes in Pure and Appl. Math., vol. 173, Dekker,
  New York, 1996, pp.~275--297. \MR{1365978 (97c:32013)}

\bibitem{78}
\bysame, \emph{On the failure of the {P}oincar\'e lemma for
  {$\overline\partial_M$}. {II}}, Math. Ann. \textbf{335} (2006), no.~1,
  193--219. \MR{2217688 (2006m:32043)}

\bibitem{Kir}
A.A. Kirillov, \emph{Lectures on the orbit method}, American Mathematical
  Society, New York, 2004, Graduate Studies in Mathematics, Vol. 64.
  \MR{MR0323842 (48 \#2197)}

\bibitem{Kn:2002}
A.~W. Knapp, \emph{{L}ie groups beyond an introduction}, second ed., Progress
  in Mathematics, vol. 140, Birkh\"auser Boston Inc., Boston, MA, 2002.
  \MR{MR1920389 (2003c:22001)}

\bibitem{Marini}
Stefano Marini, \emph{Relations of {$CR$} and {D}olbeault cohomologies for
  {M}atsuki dual orbits in complex flag manifolds}, Ph.D. thesis, Scuola
  Dottorale di Scienze Matematiche e Fisiche - Universit\`a di Roma Tre, 2016.

\bibitem{MaNa}
Stefano Marini and Mauro Nacinovich, \emph{Mostow's fibration for canonical
  embeddings of compact homogeneous $cr$ manifolds}, manuscript (2016), 1--36.

\bibitem{Mats79}
Toshihiko Matsuki, \emph{The orbits of affine symmetric spaces under the action
  of minimal parabolic subgroups}, J. Math. Soc. Japan \textbf{31} (1979),
  no.~2, 331--357. \MR{527548}

\bibitem{Mats88}
\bysame, \emph{Closure relations for orbits on affine symmetric spaces under
  the action of parabolic subgroups. {I}ntersections of associated orbits},
  Hiroshima Math. J. \textbf{18} (1988), no.~1, 59--67. \MR{MR935882
  (89f:53073)}

\bibitem{MN05}
C.~Medori and M.~Nacinovich, \emph{Algebras of infinitesimal {CR}
  automorphisms}, J. Algebra \textbf{287} (2005), no.~1, 234--274.
  \MR{MR2134266 (2006a:32043)}

\bibitem{Most56}
G.~D. Mostow, \emph{Fully reducible subgroups of algebraic groups}, Amer. J.
  Math. \textbf{78} (1956), 200--221. \MR{MR0092928 (19,1181f)}

\bibitem{Most62}
\bysame, \emph{Covariant fiberings of {K}lein spaces. {II}}, Amer. J. Math.
  \textbf{84} (1962), 466--474. \MR{MR0142688 (26 \#257)}

\bibitem{23}
M.~Nacinovich, \emph{Poincar\'e lemma for tangential {C}auchy-{R}iemann
  complexes}, Math. Ann. \textbf{268} (1984), no.~4, 449--471. \MR{753407
  (86e:32025)}

\bibitem{Zie}
Roger Zierau, \emph{Representations in {D}olbeault cohomology}, Representation
  theory of {L}ie groups ({P}ark {C}ity, {UT}, 1998), IAS/Park City Math. Ser.,
  vol.~8, Amer. Math. Soc., Providence, RI, 2000, pp.~91--146. \MR{1737727}

\end{thebibliography}
\end{document}